\documentclass[11pt]{amsart} \textwidth=14.5cm \oddsidemargin=1cm
\evensidemargin=1cm

\usepackage{amsmath,wasysym}
\usepackage{amsxtra}
\usepackage{amscd}
\usepackage{amsthm}
\usepackage{amsfonts}
\usepackage{amssymb}
\usepackage{eucal}
\usepackage{graphics, color}
\usepackage{hyperref,mathrsfs}
\usepackage[usenames,dvipsnames]{xcolor}
\usepackage{tikz}
\usepackage{tikz-cd}
\usepackage{stmaryrd}
\usepackage[all]{xy}
\usepackage{hyperref}
\usepackage{color}
\usepackage{bbm} 
\allowdisplaybreaks

\hypersetup{colorlinks,linkcolor=blue, urlcolor=blue,
	citecolor=blue}

\newtheorem{thm}{Theorem} [section]

\newtheorem{lem}[thm]{Lemma}

\newtheorem{prop}[thm]{Proposition}

\newtheorem{rem}[thm]{Remark}

\numberwithin{equation}{section}

\newcommand{\fC}{\mathbb{C}}

\newcommand{\ve}{\varepsilon}

\newcommand{\HH}{\mathcal{H}}
\newcommand{\Hom}{\mathrm{Hom}}

\newcommand{\K}{\mathbb{K}}
\newcommand{\Ker}{\mathrm{Ker}}

\newcommand{\Mid}{\underline{\mathbf{m}}}
\newcommand{\mi}{\mathbf{i}}
\newcommand{\mj}{\mathbf{j}}
\newcommand{\mk}{\mathbf{k}}
\newcommand{\ml}{\mathbf{l}}
\newcommand{\Par}{\mathrm{Par}}
\newcommand{\N}{\mathbb{N}}
\newcommand{\Nid}{\underline{\mathbf{n}}}

\newcommand{\qU}{\mathbf{U}}

\newcommand{\V}{\mathbb{V}}

\newcommand{\half}{\frac{1}{2}}

\title[Invariant theory of $\imath$quantum groups of type AIII]
{Invariant theory of $\imath$quantum groups of type AIII}

\author[Li Luo]{Li Luo}
\author[Zheming Xu]{Zheming Xu}
\address{School of Mathematical Sciences,  Key Laboratory of MEA(Ministry of Education) \& Shanghai Key Laboratory of PMMP,  East China Normal University, Shanghai 200241, China}
\email{lluo@math.ecnu.edu.cn (Luo), 52265500008@stu.ecnu.edu.cn (Xu)}


\begin{document}
	
	\begin{abstract}We develop an invariant theory of quasi-split $\imath$quantum groups $\qU_n^\imath$ of type AIII on a tensor space associated to $\imath$Howe dualities. The first and second fundamental theorems for $\qU_n^\imath$-invariants are derived.
	\end{abstract}
	
	\maketitle
	\setcounter{tocdepth}{1}
	
	\section{Introduction}
	\subsection{}
	The notion of quantum symmetric pair, consisting of a Drinfeld-Jimbo quantum group and a certain coideal subalgebra (named $\imath$quantum group), was introduced by Letzter \cite{Le99} about two decades ago. Like the real forms of simple Lie algebras, quantum symmetric pairs can also be classified with Satake diagrams.
	
	In their significant work \cite{BW18},  Bao and Wang employed the $\imath$quantum groups of type AIII without black nodes (in the sense of Satake diagrams) to reformulate the Kazhdan-Lusztig theory of type B/C so that a conceptual solution to the problem of irreducible characters for $\mathfrak{osp}$ Lie superalgebras was provided. Such insightful application attracts considerable attention to the $\imath$quantum groups. Since then, the $\imath$-program, which is devoted to generalizing various basic constructions for quantum groups to $\imath$quantum groups, is developing rapidly and has gained a great number of achievements. Even restricted to the $\imath$quantum groups of type AIII, the $\imath$-program possesses great importance because of its close relation to various objects in the Lie theory of type B/C, such as the Lie groups/algebras, Hecke algebras, Schur algebras, flag varieties, and so on.

	\subsection{}
	Let $H$ be a Hopf algebra with counit $\varepsilon$, and let $M$ be an $H$-module algebra. The first fundamental theorem (FFT) and second fundamental theorems (SFT) of classical invariant theory provide generators and their generating relations for the subalgebra $M^H=\{v\in M~|~hv=\varepsilon(h)v,\forall h\in H\}$, respectively.
	
	If $H=\mathbb{C}GL_n(\mathbb{C})$ is the group algebra of the general linear group $GL_n(\mathbb{C})$ and $M=\mathrm{sym}(V^{\oplus r}\oplus {V^*}^{\oplus s})$ the symmetric algebra where $V=\mathbb{C}^n$ is the natural representation of $GL_n(\mathbb{C})$ and $V^*$ its dual, then the associated classical invariant theory can be traced back to the celebrated Schur duality, which describes the endomorphism algebras $\mathrm{End}_{GL_n(\mathbb{C})}(V^{\otimes d})$.
	The classical Howe duality, yielding an interplay between a pair of classical groups, also provides a representation theoretical treatment for classical invariant theory (cf. \cite{Ho89}).
	In principle for classical groups, the fundamental theorems of invariant theory, the Schur duality and the Howe duality are equivalent, though they are not equally straightforward.
	
	Now there have been many literatures focusing on quantum analogues of the Schur duality, Howe duality and fundamental theorems of invariant theory.
	For type A (i.e. quantum general linear groups $\qU_N=U_q(\mathfrak{gl}_N)$), the Schur duality, often refereed to as Schur-Jimbo duality, was established in 1986 \cite{Jim86}. The Howe duality
	$\qU_M\curvearrowright \mathcal{V}_{N,M}\curvearrowleft \qU_N$,
	where the $(\qU_M,\qU_N)$-module algebra ${\mathcal V}_{N,M}$ can be regarded as a non-commutative analogue of the symmetric algebra $\mathrm{sym}(\mathbb{C}^M\otimes{\mathbb{C}^N}^*)$, was obtained by Zhang \cite{Z03} via quantum coordinate algebras and by Baumann \cite{Ba07} via geometric Beilinson-Lusztig-MacPherson realization, independently. It was proved in \cite{LX22} that these two approaches coincide. Based on the Howe duality, Lehrer-Zhang-Zhang \cite{LZZ11} introduced a $\qU_N$-module algebra $\mathcal{P}_{P,R}=\mathcal{V}_{P,N} \otimes \overline{\mathcal{V}}_{R,N}$, where $\overline{\mathcal{V}}_{R,N}$ is a dual of $\mathcal{V}_{R,N}$ in a certain sense, and then they derived the FFT for $\qU_N$-invariants on $\mathcal{P}_{P,R}$. This $\qU_N$-module algebra $\mathcal{P}_{P,R}$ is a quantum analogue of the symmetric algebra $\mathrm{sym}((\mathbb{C}^N)^{\oplus P}\oplus ({\mathbb{C}^N}^*)^{\oplus R})$. The associated SFT was shown in \cite{Z20} where both FFT and SFT for the quantum general linear supergroups are established. See also \cite{CW20,CW22} for the Howe duality and FFT for quantum queer supergroups.
	
	The same picture for type B/C (i.e. $\imath$quantum groups of type AIII) has not been completed yet. The $\imath$Schur duality was shown in \cite{BW18} (see also \cite{BWW18, FLLLWW} for multi-parameter case). The authors introduced a $(\qU_m^\imath,\qU_n^\imath)$-module $\mathcal{V}^\imath_{n,m}$ and achieved the $\imath$Howe duality in \cite{LX22} via both Zhang's algebraic approach and Baumann's geometric approach. Similar to type A, now it is natural to ask what the fundamental theorems of the $\qU_n^\imath$-invariants on a tensor space $\mathcal{P}^\imath_{p,r}$ (an $\imath$-analogue of $\mathcal{P}_{P,R}$) are.
	
	\subsection{}
	The goal of this manuscript is to give the FFT and SFT for $\qU^\imath_n$-invariants on  $\mathcal{P}^\imath_{p,r}$. But there exist several obstacles we have to overcome.
	
	
	The $\imath$quantum group $\qU^\imath$ is not a Hopf algebra, without coproduct or antipode. Moreover, it has no R-matrix to afford the isomorphism between the tensor modules $A\otimes B$ and $B\otimes A$ (The K-matrix of an $\imath$quantum group is not applicable for this problem). But in the formulation of the FFT for $\qU_N$ in \cite{LZZ11}, all these operations are necessary. For this reason, we shall revisit the FFT and SFT for $\qU_N$ in \S~\ref{rew} where some constructions in \cite{LZZ11} are modified. Especially, we let $\mathcal{P}_{P,R}$ be $\mathcal{V}_{P,N} \otimes \mathcal{V}_{N,R}$, instead of $\mathcal{V}_{P,N} \otimes \overline{\mathcal{V}}_{R,N}$. Though such $\mathcal{P}_{P,R}$ is not only a $\qU_N$-module but also an algebra, it is no longer a $\qU_N$-module algebra. Nonetheless, it has the advantage that we can give an equivalent condition for the $\qU_N$-invariants on $\mathcal{P}_{P,R}$, which do not need to use coproduct, antipode or R-matrix. Based on this observation, we set $\mathcal{P}^\imath_{p,r}=\mathcal{V}^\imath_{p,n} \otimes \mathcal{V}^\imath_{n,r}$ and define the invariants $\mathcal{X}_{p,r}^\imath$ as \eqref{def:itensor} inspired by Lemma~\ref{lem:inv}.
	
	Another obstacle is that $\mathcal{P}^\imath_{p,r}$ is not an algebra (which is also caused by the lack of coalgebra structure for $\qU_n^\imath$). So the FFT and SFT can not be formulated in terms of  generators and generating relations of algebras. Inspired by \cite{LZZ11}, we introduce a $\mathcal{V}_{P,R}$-module homomorphism $\Psi_{p,r}^\imath: \mathcal{V}_{p,r}^\imath\rightarrow \mathcal{P}_{p,r}^\imath$, which maps onto the invariants $\mathcal{X}_{p,r}^\imath$. Thus the FFT we formulate in this paper provides generators of $\mathcal{X}^\imath_{p,r}$ as a $\mathcal{V}_{P,R}$-module, and the SFT indicates the kernel of $\Psi_{p,r}^\imath$.

	\subsection{}
	We organize the paper as follows. In Section 2, we list some basic knowledge about Hopf algebras, and recall the invariant theory for quantum general linear groups and $\imath$Howe dualities for $\imath$quantum groups of type AIII. Sections 3 is devoted to the FFT and SFT for $\imath$quantum groups of type AIII.
	
	\vspace{2mm}
	\noindent {\bf Convention.}
	Throughout the paper, we take $n\in\frac{1}{2}\mathbb{N}$ and $N\in \mathbb{N}$ with $N=2n+1$. Write $n^+=\lceil n+\half \rceil$ and $n^-=\lfloor n+\half \rfloor$.
	Let $\underline{\mathbf{n}}=\{-n,-n+1,\ldots,n-1,n\}$, $\mathbb{I}_n=\{-n+\frac{1}{2},-n+\frac{3}{2},\ldots,n-\frac{3}{2},n-\frac{1}{2}\}$ and $\mathbb{I}_n^{\imath}=\{i\in\mathbb{I}_n~|~i>0\}$. Similar notations will be used for $m,r,p\in\frac{1}{2}\mathbb{N}$ and $M, R, P\in \mathbb{N}$ with $M=2m+1, R=2r+1,P=2p+1$.
	We shall work over the field $\K=\mathbb{C}(q)$, where $q$ is an indeterminate.
	
	\vspace{2mm}
	\noindent {\bf Acknowledgement.}
	We thank Yongjie Wang for a number of helpful remarks. We would like to express our gratitude to the referees for their insightful comments towards several improvements of this article.
	The work is partially supported by the Science and Technology Commission of Shanghai Municipality (grant No. 21ZR1420000, 22DZ2229014), the NSF of China (grant No. 12371028), and Fundamental Research Funds for the Central Universities.
	
	\vspace{2mm}
	\noindent {\bf Conflict interest statement.}
	On behalf of all authors, the corresponding author states that there is no conflict of interest.
	
	\vspace{2mm}
	\noindent {\bf Data availability statement.}
	This manuscript has no associated data.

\section{Quantum groups and $\imath$quantum groups}
	\subsection{Preliminary on Hopf algebras}
	Let $H$ be a Hopf algebra (over a field $\mathbb{F}$) whose coproduct, counit and antipode are denoted by $\Delta$, $\varepsilon$ and $S$, respectively.
	For any $c \in H$, we shall omit summation symbol and denote $\Delta^{(n-1)}(c)$ simply by $c_{(1)}\otimes \cdots \otimes c_{(n)}$.

	An algebra $A$ is called a left (resp. right) \emph{$H$-module algebra} if it is a left (resp. right) $H$-module and the multiplication of $A$ is an $H$-module homomorphism from $A\otimes A$ to $A$.
	
	For any left $H$-module $A$ and right $H$-module $B$, their tensor product $A\otimes B$ is a left $H$-module whose $H$-action is as follows:
	\begin{equation}\label{def:tensor}
		c(a\otimes b)=c_{(1)} a\otimes b S(c_{(2)}), \quad(\forall c\in H, a\in A, b\in B).
	\end{equation}
	
	\begin{rem} Let $A$ and $B$ be a left and a right $H$-module algebra, respectively. Though $A\otimes B$ is both an algebra and a (left) $H$-module, it is not a (left) $H$-module algebra in general.
	\end{rem}
	
	The $H$-invariants on a left $H$-module $A$ are defined as
	\begin{equation}\label{def:inv}
		A^H:=\{a\in A~|~ca=\varepsilon(c)a, \forall c\in H\}.
	\end{equation}
	
	The following lemma might be well-known to experts. For safety, we provide a proof.
	\begin{lem}\label{lem:inv}
		Let $H$ be a Hopf algebra, and let $A$ (resp. $B$) be a left (resp. right) $H$-module algebra.
		\begin{itemize}
			\item[(a)] The space of invariants $(A\otimes B)^H$ is a subalgebra of $A \otimes B$.
			\item[(b)] Moreover, it holds that
			\begin{equation}\label{eq:inv}
				(A\otimes B)^H=\{\sum_i a_i\otimes b_i\in A\otimes B~|~\sum_ica_i\otimes b_i=\sum_ia_i\otimes b_ic, \forall c\in H\}.
			\end{equation}
		\end{itemize}
	\end{lem}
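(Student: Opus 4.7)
The plan is to establish (b) first, then derive (a) as a consequence.

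For the inclusion $\supseteq$ in \eqref{eq:inv}, take $\omega = \sum_i a_i \otimes b_i$ satisfying the right-hand side condition. Compute
\[
c\omega = \sum_i (c_{(1)} a_i)\otimes(b_i S(c_{(2)}))
\]
using \eqref{def:tensor}, then invoke the hypothesis with parameter $c_{(1)}$ to rewrite $\sum_i (c_{(1)}a_i)\otimes b_i$ as $\sum_i a_i\otimes (b_i c_{(1)})$. The second tensor factor then becomes $b_i c_{(1)} S(c_{(2)})$, which the antipode axiom $c_{(1)}S(c_{(2)}) = \varepsilon(c)1_H$ collapses to $\varepsilon(c)\omega$.

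For the reverse inclusion, substitute $c \mapsto c_{(1)}$ in the invariance identity and act on the second tensor factor by the right action of $c_{(2)}$. Coassociativity converts the iterated Sweedler components $(c_{(1)})_{(1)}\otimes (c_{(1)})_{(2)}\otimes c_{(2)}$ into the standard $c_{(1)}\otimes c_{(2)}\otimes c_{(3)}$, so the left side becomes $\sum_i (c_{(1)}a_i)\otimes (b_i S(c_{(2)})c_{(3)})$. The identity $\sum_{(c)} c_{(1)}\otimes S(c_{(2)})c_{(3)} = c\otimes 1_H$ (obtained by applying the antipode axiom to $\Delta(c_{(2)})$ and then the counit axiom) collapses this to $\sum_i (ca_i)\otimes b_i$. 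The right side simplifies via the counit axiom to $\sum_i a_i\otimes (b_i c)$, and equating the two establishes the condition of \eqref{eq:inv} for arbitrary $c$.

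Part (a) follows quickly from (b). Take $x = \sum_i a_i\otimes b_i$ and $y = \sum_j a'_j\otimes b'_j$ in $(A\otimes B)^H$, and verify that $xy = \sum_{i,j} a_i a'_j\otimes b_i b'_j$ satisfies the characterization in \eqref{eq:inv}. Exploiting the left $H$-module algebra structure on $A$ and the right one on $B$,
\[
c(a_i a'_j) = (c_{(1)} a_i)(c_{(2)} a'_j),\qquad (b_i b'_j)c = (b_i c_{(1)})(b'_j c_{(2)}).
\]
Applying \eqref{eq:inv} to $x$ with parameter $c_{(1)}$ and to $y$ with parameter $c_{(2)}$ moves each Sweedler component across the tensor product, yielding the required equality. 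The main obstacle is the Sweedler bookkeeping in the reverse inclusion of (b); once the identity $\sum_{(c)} c_{(1)}\otimes S(c_{(2)})c_{(3)} = c\otimes 1_H$ is pinpointed, the argument proceeds without difficulty.
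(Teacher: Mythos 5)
Your proposal is correct. Part (b) is essentially the paper's own argument: for the containment $\supseteq$ you move one Sweedler leg across the tensor sign via the hypothesis and collapse with $c_{(1)}S(c_{(2)})=\varepsilon(c)1$ (the paper does the mirror-image move, transporting $S(c_{(2)})$ to the left factor instead of $c_{(1)}$ to the right, which is immaterial), and for $\subseteq$ both you and the paper apply invariance at $c_{(1)}$, act by $c_{(2)}$ on the right of the second factor, and use $\sum c_{(1)}\otimes S(c_{(2)})c_{(3)}=c\otimes 1$ together with the counit axiom. Where you genuinely diverge is part (a): the paper proves it directly from the definition of invariants, via a computation with the four-fold Sweedler components $c_{(1)},\dots,c_{(4)}$ and two successive uses of invariance of the factors, whereas you deduce (a) from the characterization (b), applying it summand-by-summand of $\Delta(c)$ together with the module-algebra identities $c(aa')=(c_{(1)}a)(c_{(2)}a')$ and $(bb')c=(bc_{(1)})(b'c_{(2)})$. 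Your route makes (a) depend on (b) but is arguably cleaner, since the antipode disappears entirely from that step; the paper's keeps (a) self-contained at the cost of heavier Sweedler bookkeeping. One small point worth making explicit in your write-up: when you ``invoke the hypothesis with parameter $c_{(1)}$'' (or $c_{(1)}$, $c_{(2)}$ in part (a)), the substitution must be performed for each actual summand of $\Delta(c)$ before resumming, since the Sweedler legs are entangled; your argument does this implicitly and is fine, but stating it removes any ambiguity.
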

	\begin{proof}
		Take any $\sum_i a_i \otimes b_i, \sum_j a_j' \otimes b_j' \in (A\otimes B)^H$. We calculate that
		\begin{align*}
			&c(\sum_i a_i \otimes b_i)(\sum_j a_j' \otimes b_j')=\sum_{i,j}c_{(1)}(a_ia_j')\otimes (b_ib_j')S(c_{(2)})\\
			=&\sum_{i,j}(c_{(1)}a_i)(c_{(2)}a_j')\otimes (b_iS(c_{(4)}))(b_j'S(c_{(3)}))\\
			=&\big(\sum_i c_{(1)}a_i \otimes b_iS(c_{(4)})\big) \big(\sum_j c_{(2)}a_j' \otimes b_j'S(c_{(3)})\big)\\
			=&\big(\sum_i c_{(1)}a_i \otimes b_iS(c_{(3)})\big) \big(\varepsilon(c_{(2)})\sum_j a_j' \otimes b_j'\big)\\
			=&\big(\sum_i c_{(1)}a_i \otimes b_iS(c_{(2)})\big) (\sum_j a_j' \otimes b_j')\\
			=&\varepsilon(c) (\sum_i a_i \otimes b_i) (\sum_j a_j' \otimes b_j'),
		\end{align*}
		which implies that $(A\otimes B)^H$ is a subalgebra of $A\otimes B$. The statement (a) is proved.

		Denote the RHS of \eqref{eq:inv} by $\mathscr{V}$.
		For any $\sum_i a_i\otimes b_i\in \mathscr{V}$, we have
		$$c(\sum_i a_i\otimes b_i)= \sum_i c_{(1)} a_i\otimes b_i S(c_{(2)})=\sum_i c_{(1)} S(c_{(2)})a_i\otimes b_i=\varepsilon(c)\sum_i a_i\otimes b_i,\quad (\forall c\in H).$$
		Hence $\mathscr{V}\subset(A\otimes B)^H$.
		
		Conversely, we assume $\sum_i a_i\otimes b_i\in(A\otimes B)^H$. Then for any $c\in H$, we have
		\begin{align*}
			&\sum_i c_{(1)}\varepsilon(c_{(2)})a_i\otimes b_i=\sum_i c_{(1)}a_i\otimes b_i \varepsilon(c_{(2)})=\sum_i ca_i\otimes b_i=\sum_i\varepsilon(c_{(1)})a_i\otimes b_ic_{(2)}\\
			&=\sum_i c_{(1)}a_i\otimes b_i S(c_{(2)})c_{(3)}=\sum_i a_i\otimes b_i c,
		\end{align*} which implies $(A\otimes B)^H\subset\mathscr{V}$.
		
		Therefore we have $\mathscr{V}=(A\otimes B)^H$, i.e. the statement (b).
	\end{proof}
	
	A subalgebra $C$ of a Hopf algebra $H$ is called a right (resp. left) \emph{coideal subalgebra} if $\Delta(C)\subset C\otimes H$ (resp. $\Delta(C)\subset H\otimes C$).
	Let $\widetilde{A}$ (reps. $\widetilde{B}$) be a left (resp. right) $C$-module. Note that there is no natural $C$-action on $\widetilde{A}\otimes \widetilde{B}$ like \eqref{def:tensor} because of the lack of coproduct and antipode of $C$.
	Inspired by Lemma~\ref{lem:inv}(b), we can still define
	\begin{equation}\label{def:itensor}
		(\widetilde{A}\otimes \widetilde{B})^C:=\{\sum_i  a_i\otimes b_i\in \widetilde{A}\otimes \widetilde{B}~|~\sum_i ca_i\otimes b_i=\sum_i a_i\otimes b_ic, \forall c\in C\}.
	\end{equation}

	\subsection{Quantum general linear group}
	Let $\qU_N$ denote the quantum general linear group $U_q(\mathfrak{gl}_N)$
	over $\K$ with generators $E_i,F_i \ (i\in \mathbb{I}_n)$ and $D_a^{\pm 1} \ (a\in\Nid)$, subject to the following relations:
	\begin{align*}
		D_aD_a^{-1}=1,\quad &D_{a}^{\pm1}D_{b}^{\pm1}=D_{b}^{\pm1}D_{a}^{\pm1},\\
		D_aE_iD_a^{-1}=q^{\delta_{a-\frac{1}{2},i}-\delta_{a+\frac{1}{2},i}}E_i,\quad &D_aF_iD_a^{-1}=q^{\delta_{a+\frac{1}{2},i}-\delta_{a-\frac{1}{2},i}}F_i,\\
		E_iF_j-F_jE_i=&\delta_{ij}\frac{K_i-K_i^{-1}}{q-q^{-1}},\quad\mbox{where $K_i=D_{i-\frac{1}{2}}D_{i+\frac{1}{2}}^{-1}$},\\
		E_iE_j=E_jE_i,\quad &F_iF_j=F_jF_i, \quad(|i-j|>1),\\
		E_i^2E_j+E_jE_i^{2}=(q+q^{-1})E_iE_jE_i, \quad &F_i^2F_j+F_jF_i^{2}=(q+q^{-1})F_iF_jF_i,\quad(|i-j|=1),
	\end{align*}
	
	There is a Hopf algebra structure on $\qU_N$ with the coproduct $\Delta$, the
	counit $\ve$, and the antipode $S$ as follows:
	\begin{align*}
		\Delta(E_i)&=E_i \otimes K_i^{-1} +1 \otimes E_i,\quad
		\Delta(F_i)=F_i \otimes 1+K_i \otimes F_i,\quad
		\Delta(D_a)=D_a \otimes D_a;\\
		\ve(E_i)&=\ve(F_i)=0,\quad \ve(D_a)=1;\\
		S(E_i)&=-E_i K_i,\quad
		S(F_i)=-K_i^{-1} F_i,\quad
		S(D_a)=D_a^{-1}.
	\end{align*}
	
	\subsection{Quantum coordinate algebra $\mathcal{V}_{N,M}$}
	Let $\V_N$ be the natural representation of $\qU_N$, which has a standard basis $\{ v_j ~|~ j \in\Nid\}$ such that
	$$ D_a v_j=q^{\delta_{aj}} v_j, \quad E_i v_j=\delta_{i,j-\half} v_{j-1}, \quad  F_i v_j=\delta_{i,j+\half} v_{j+1},\quad(\forall a,j\in\Nid, i\in\mathbb{I}_n).$$
	Let $t_{ij}\in\qU_N^*$ $(i,j\in\Nid)$ be the matrix coefficients of $\V_N$ relative to the standard basis, i.e.,
	$$xv_j=\sum_{i\in\Nid}\langle t_{ij}, x \rangle v_i,\quad (\forall x\in\qU_N, j\in\Nid).$$
	The \emph{quantum coordinate algebra} $\mathcal{V}_{N,M}$ is a $\K$-algebra generated by $t_{ij}$ $(i\in\Nid,j\in\Mid)$ satisfying that
	for $i,k \in \underline{\mathbf{n}}$ and $j,l\in\underline{\mathbf{m}}$ with $i<k, j<l$,
	\begin{align*}
		t_{ij} t_{kl}=t_{kl} t_{ij}+(q-q^{-1})t_{il}t_{kj},\quad
		t_{il} t_{kj}=t_{kj} t_{il},\quad
		t_{ij} t_{il}=qt_{il} t_{ij},\quad
		t_{ij} t_{kj}=qt_{kj} t_{ij}.
	\end{align*}
	It can be regarded as a non-commutative analogue of the symmetric algebra $\mathrm{sym}(\mathbb{C}^M\otimes{\mathbb{C}^N}^*)$.
	Furthermore, it is shown in \cite{Ta92} that $\mathcal{V}_{N,M}$ is a $(\qU_M,\qU_N)$-module algebra, on which Zhang \cite{Z03} established the Howe duality between $\qU_M$ and $\qU_N$. Precisely, the $(\qU_M,\qU_N)$-action on $\mathcal{V}_{N,M}$ is given as follows:
	for $x\in\qU_M, y\in\qU_N, t_{ij}\in\mathcal{V}_{N,M}$,
	\begin{equation}\label{action:xtij}
		xt_{ij}=\sum_{k \in \Mid} \langle t_{kj},x\rangle t_{ik},\qquad t_{ij}y=\sum_{k \in \Nid} \langle t_{ik},y\rangle t_{kj},
	\end{equation}
	where the $t_{kj}$'s (resp. $t_{ik}$'s) in the first (resp. second) equality are matrix coefficients in $\qU_M^*$ (resp. $\qU_N^*$).
	
	For $\mi=(i_1,\ldots,i_d) \in \underline{\mathbf{n}}^d, \mj=(j_1,\ldots,j_d) \in \underline{\mathbf{m}}^d$, we always denote $$v_\mi:=v_{i_1}\otimes v_{i_2}\otimes v_{i_d}\in \mathbb{V}_N^{\otimes d} \quad\mbox{and}\quad t_{\mi\mj}:=t_{i_1 j_1} \cdots t_{i_d j_d}\in\mathcal{V}_{N,M}.$$
	For $x \in \qU_N$ and $\mi,\mj \in \Nid^d$, let
	$$x_{\mi\mj}:=\prod_{a=1}^d\langle t_{i_aj_a},x_{(a)}\rangle \in \K.$$
	By definition, we have $xv_\mj=\sum_{\mi \in \Nid^d} x_{\mi\mj} v_\mi$.
	
	\begin{lem}\label{lem:xty}
		For $x \in \qU_M$, $y \in \qU_N$, $\mi \in \Nid^d$, $\mj \in \Mid^d$, we have
		\begin{align*}
			xt_{\mi\mj}&=\sum_{\mk \in \Mid^d} x_{\mk\mj}t_{\mi\mk}, \quad t_{\mi\mj}y=\sum_{\mk \in \Nid^d} y_{\mi\mk}t_{\mk\mj}.
		\end{align*}
	\end{lem}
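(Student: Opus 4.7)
The plan is to prove both identities by reducing to the single-generator formula \eqref{action:xtij} via the module-algebra property of $\mathcal{V}_{N,M}$.

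First I would record the general fact that, in any left $H$-module algebra, iterated multiplication is compatible with the iterated coproduct: for $a_1,\dots,a_d \in \mathcal{V}_{N,M}$ and $x \in \qU_M$,
\begin{equation*}
x(a_1 a_2 \cdots a_d)=(x_{(1)}a_1)(x_{(2)}a_2)\cdots(x_{(d)}a_d),
\end{equation*}
which follows by a straightforward induction on $d$ from the module-algebra axiom and coassociativity of $\Delta$. An analogous statement holds for a right $H$-module algebra.

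Next, for the first identity, I would specialize $a_s=t_{i_s j_s}$ and plug into \eqref{action:xtij}, which gives
\begin{equation*}
x_{(s)} t_{i_s j_s}=\sum_{k_s \in \Mid}\langle t_{k_s j_s},x_{(s)}\rangle\, t_{i_s k_s}.
\end{equation*}
Multiplying these together for $s=1,\dots,d$ and collecting the scalar coefficients gives
\begin{equation*}
xt_{\mi\mj}=\sum_{\mk\in\Mid^d}\Bigl(\prod_{s=1}^{d}\langle t_{k_s j_s},x_{(s)}\rangle\Bigr)\, t_{i_1 k_1}\cdots t_{i_d k_d}=\sum_{\mk\in\Mid^d}x_{\mk\mj}\, t_{\mi\mk},
\end{equation*}
which is exactly the desired formula; the scalar factor matches the definition $x_{\mk\mj}=\prod_{a=1}^{d}\langle t_{k_a j_a},x_{(a)}\rangle$. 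The second identity is proved in the same way, using the right $\qU_N$-module algebra structure of $\mathcal{V}_{N,M}$ recalled before the statement, together with the second half of \eqref{action:xtij}.

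There is no genuine obstacle here: the whole proof is bookkeeping. The only point requiring a bit of care is the order of the tensor factors in $\Delta^{(d-1)}(x)$ versus the order of the $t_{i_sj_s}$, but since the iterated coproduct is associative and both sides are indexed compatibly by $s=1,\dots,d$, this matches automatically. If desired, one can instead give a one-line induction on $d$ by splitting $t_{\mi\mj}=t_{i_1 j_1}\cdot t_{\mi'\mj'}$ with $\mi'=(i_2,\dots,i_d)$, $\mj'=(j_2,\dots,j_d)$, applying the module-algebra axiom $x(ab)=(x_{(1)}a)(x_{(2)}b)$, and using the inductive hypothesis on the tail factor.
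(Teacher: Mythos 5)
Your proposal is correct and follows essentially the same route as the paper: the paper's proof likewise expands $xt_{\mi\mj}=(x_{(1)}t_{i_1j_1})\cdots(x_{(d)}t_{i_dj_d})$ via the module-algebra property, applies \eqref{action:xtij} factor by factor, and collects the scalars into $x_{\mk\mj}$, treating the right action symmetrically. No gaps; your extra remarks on coassociativity and the optional induction are just elaborations of the same computation.
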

	
	\begin{proof}
		We shall only prove the first equality since the other is similar.
		It is a straightforward computation that
		\begin{align*}
			xt_{\mi\mj}&=(x_{(1)}t_{i_1j_1})\cdots (x_{(d)}t_{i_dj_d})=\sum_{\mk \in \Mid^d} (\langle t_{k_1j_1}, x_{(1)}\rangle t_{i_1k_1})\cdots (\langle t_{k_dj_d},x_{(d)}\rangle t_{i_dk_d})\\
			&=\sum_{\mk \in \Mid^d} (\prod_{a}\langle t_{k_aj_a}, x_{(a)}\rangle) t_{\mi\mk}=\sum_{k \in \Mid^d} x_{\mk\mj} t_{\mi\mk}.
		\end{align*}
	\end{proof}
	
	In \cite{LZZ11}, another quantum coordinate algebra $\overline{\mathcal{V}}_{N,M}$ is introduced in a similar way via replacing $t_{ij}$ by $\overline{t}_{ij}$, which are the matrix coefficients of the dual of the natural representation.
	
	\subsection{Review of the FFT and the SFT for $\qU_N$}\label{rew}
	The FFT and the SFT for $\qU_N$-invariants on $\mathcal{V}_{P,N} \otimes \overline{\mathcal{V}}_{R,N}$ are given in \cite{LZZ11} and \cite{Z20}, respectively. In their constructions, the $R$-matrix is employed to introduce a twisted multiplication on the tensor space $\mathcal{V}_{P,N} \otimes \overline{\mathcal{V}}_{R,N}$ so that $\mathcal{V}_{P,N} \otimes \overline{\mathcal{V}}_{R,N}$ is a $\qU_N$-module algebra.
	
	Below we shall consider the $\qU_N$-invariants on $\mathcal{V}_{P,N} \otimes \mathcal{V}_{N,R}$ instead of $\mathcal{V}_{P,N} \otimes \overline{\mathcal{V}}_{R,N}$. We must acknowledge that $\mathcal{V}_{P,N} \otimes \mathcal{V}_{N,R}$ is no longer a $\qU_N$-module algebra. This weakness is acceptable under our motivation since the $\imath$-analogue of $\mathcal{V}_{N,M}$ is no longer an algebra (and hence the associated tensor space cannot be a module algebra).
	
	Denote by $\mathcal{P}_{P,R}:=\mathcal{V}_{P,N} \otimes \mathcal{V}_{N,R}$ the tensor algebra of $\mathcal{V}_{P,N}$ and $\mathcal{V}_{N,R}$. As mentioned in \eqref{def:tensor}, the tensor algebra $\mathcal{P}_{P,R}$ is still a $\qU_N$-module defined by
	$$x(f\otimes g)=(x_{(1)}f)\otimes (gS(x_{(2)})),\qquad (\forall x\in \qU_N, f\in \mathcal{V}_{P,N}, g\in\mathcal{V}_{N,R}).$$
	Let $\mathcal{X}_{P,R}:=\mathcal{P}_{P,R}^{\qU_N}$ denote the space of $\qU_N$-invariants in $\mathcal{P}_{P,R}$. By definition and Lemma~\ref{lem:inv},
	\begin{align*}
		\mathcal{X}_{P,R}&=\{\sum f\otimes g\in\mathcal{P}_{P,R}~|~x(\sum f\otimes g)=\varepsilon(x)(\sum f\otimes g), \forall x\in \qU_N\}\\
		&=\{\sum f\otimes g\in\mathcal{P}_{P,R}~|~ \sum xf\otimes g=\sum f \otimes gx, \forall x\in \qU_N\}.
	\end{align*}
	Both $\mathcal{P}_{P,R}$ and $\mathcal{X}_{P,R}$ are $(\qU_R,\qU_P)$-modules.
	
	Introduce the following elements in $\mathcal{P}_{P,R}$:
	\begin{align*}
		X_{ij}:=\sum_{k \in\Nid} t_{ik} \otimes t_{kj}, \quad (i \in \underline{\mathbf{p}}, j \in \underline{\mathbf{r}}).
	\end{align*}
	By \eqref{action:xtij}, we have $X_{ij}\in\mathcal{X}_{P,R}$.
	
	For any $d\in\mathbb{N}$,
	denote
	\begin{align*}
		\mathfrak{A}_{d,M}:=\{\mi=(i_1,\ldots,i_d)\in \Mid^d ~|~ -m \leq i_1 < \cdots < i_d \leq m\}.
	\end{align*}
	For $\mi \in \mathfrak{A}_{N,P}$ and $\mj \in \mathfrak{A}_{N,R}$, the \emph{quantum $N$-minor} $\Delta(\mi,\mj) \in \mathcal{V}_{P,R}$ is defined as
	\begin{align*}
		\Delta(\mi,\mj):=\sum_{w \in \mathfrak{S}_N} (-q)^{\ell(w)}t_{\mi, \mj w},
	\end{align*}
	where $\ell(w)$ is the length of $w$, and $\mj w$ means the natural right $\mathfrak{S}_N$-action on $\underline{\mathbf{r}}^N$.
	
	Though we slightly adjust the constructions used in \cite{LZZ11} and \cite{Z20}, it follows from Lemma~\ref{lem:inv}(a) that $\mathcal{X}_{P,R}$ is still a subalgebra of $\mathcal{P}_{P,R}$. Moreover, the arguments in \cite{LZZ11} and \cite{Z20} are still valid to obtain the following theorem on invariant theory of $\qU_N$.
	\begin{thm}[\cite{LZZ11,Z20}] \label{UN:FFTandSFT}
		{\em(1) \textbf{FFT for $\qU_N$}}. There exists a surjective algebra homomorphism $\Psi_{P,R}: \mathcal{V}_{P,R} \rightarrow \mathcal{X}_{P,R}$ defined by
		$\Psi_{P,R}(t_{ij})= X_{ij}, (\forall i\in\underline{\mathbf{p}}, j\in\underline{\mathbf{r}})$,
		which means that $\{X_{ij}~|~i\in\underline{\mathbf{p}}, j\in\underline{\mathbf{r}}\}$ generates the $\qU_N$-invariant subalgebra $\mathcal{X}_{P,R}$.\\
		{\em(2) \textbf{SFT for $\qU_N$}}.
		The ideal $\Ker\Psi_{P,R}$ of $\mathcal{V}_{P,R}$ is generated by quantum $N$-minors, which offers the generating relations of generators $X_{ij}, (i\in\underline{\mathbf{p}}, j\in\underline{\mathbf{r}})$. Moreover, the set $\{ \Delta(\mi,\mj) ~|~ \mi \in \mathfrak{A}_{N,P}, \mj \in \mathfrak{A}_{N,R}\}$ generates  $\Ker\Psi_{P,R}$ as a two-sided ideal of $\mathcal{V}_{P,R}$.
	\end{thm}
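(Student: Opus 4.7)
The plan is to adapt the arguments of \cite{LZZ11} and \cite{Z20} to the modified setting in which $\overline{\mathcal{V}}_{R,N}$ is replaced by $\mathcal{V}_{N,R}$, so that $\mathcal{P}_{P,R}$ is merely a $\qU_N$-module rather than a module algebra. The crucial replacement is Lemma~\ref{lem:inv}: part (a) guarantees $\mathcal{X}_{P,R}$ is still a subalgebra, and part (b) provides a workable description of invariants that entirely avoids the coproduct and antipode.

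First I would verify that $\Psi_{P,R}$ is a well-defined algebra homomorphism by checking that the elements $X_{ij}=\sum_k t_{ik}\otimes t_{kj}$ satisfy the four $q$-commutation relations defining $\mathcal{V}_{P,R}$. Expanding $X_{ij}X_{kl}$ as $\sum_{a,b\in\Nid}t_{ia}t_{kb}\otimes t_{aj}t_{bl}$ and applying the defining relations of $\mathcal{V}_{P,N}$ and $\mathcal{V}_{N,R}$ to the two tensor factors, the cross terms in $q-q^{-1}$ produced on each side combine in exactly the required fashion; this is the standard comatrix calculation. Next, the invariance of $X_{ij}$ in the sense of Lemma~\ref{lem:inv}(b) reduces to checking $\sum_k xt_{ik}\otimes t_{kj}=\sum_k t_{ik}\otimes t_{kj}x$ for every $x\in\qU_N$, and both sides expand via \eqref{action:xtij} into the same double sum $\sum_{k,k'}\langle t_{k'k},x\rangle\, t_{ik'}\otimes t_{kj}$.

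Surjectivity of $\Psi_{P,R}$ is where I expect the real work to lie. My preferred approach is to construct a $\qU_N$-equivariant linear isomorphism $\mathcal{V}_{P,N}\otimes\mathcal{V}_{N,R}\cong\mathcal{V}_{P,N}\otimes\overline{\mathcal{V}}_{R,N}$ obtained by twisting the right $\qU_N$-action on the second factor through the antipode (which identifies the matrix-coefficient modules of the natural representation and its dual). If this isomorphism carries the $X_{ij}$ onto the corresponding generators of \cite{LZZ11}, surjectivity follows directly from their theorem. Failing a manifest isomorphism, one can reproduce their weight-space induction: decompose $\mathcal{P}_{P,R}$ into $(\qU_P,\qU_R)$-weight components (which commute with the $\qU_N$-action), match weights across the two tensor factors to pin down $\mathcal{X}_{P,R}$, and induct on total degree to express an arbitrary invariant as a polynomial in the $X_{ij}$'s.

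For the SFT, I would first verify that each quantum $N$-minor $\Delta(\mi,\mj)$ lies in $\Ker\Psi_{P,R}$ by a direct computation: expanding $\Psi_{P,R}(\Delta(\mi,\mj))$ as $\sum_{\mk\in\Nid^N}t_{\mi,\mk}\otimes\Delta(\mk,\mj)$ and invoking the standard $q$-vanishing of quantum minors on rows with repeated entries (together with the sign cancellation for $\mk$ running through the permutations of $\Nid$). The converse, that these minors generate the entire kernel, would follow from a graded-dimension comparison between $\mathcal{V}_{P,R}$ modulo the minor ideal and $\mathcal{X}_{P,R}$, with both Hilbert series computed through the $(\qU_P,\qU_R)$-weight decomposition. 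The main subtlety I anticipate is that the argument of \cite{Z20} exploits the module algebra structure of $\mathcal{V}_{P,N}\otimes\overline{\mathcal{V}}_{R,N}$ that is no longer available here; one must therefore either verify that the weaker subalgebra structure supplied by Lemma~\ref{lem:inv}(a) suffices to run their argument, or else transport it across the equivariant isomorphism described above.
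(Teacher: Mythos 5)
Your overall route is the same as the paper's: the paper itself gives no independent argument for this theorem beyond observing that Lemma~\ref{lem:inv}(a) still makes $\mathcal{X}_{P,R}$ a subalgebra and asserting that the arguments of \cite{LZZ11,Z20} carry over to $\mathcal{V}_{P,N}\otimes\mathcal{V}_{N,R}$; your FFT steps (the comatrix check that the $X_{ij}$ satisfy the relations of $\mathcal{V}_{P,R}$ in the untwisted tensor algebra, the invariance check via \eqref{action:xtij}, and surjectivity by either transporting the \cite{LZZ11} result or redoing their weight/degree induction) are consistent with that, with the one caution that the antipode-twist identification with $\mathcal{V}_{P,N}\otimes\overline{\mathcal{V}}_{R,N}$ is only $\qU_N$-equivariant and not an algebra map (their product is braided, yours is untwisted), so surjectivity does not transport for free -- but you already name the fallback.

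The genuine gap is in your SFT verification. For minors of size $N$, i.e. $\mi\in\mathfrak{A}_{N,P}$, $\mj\in\mathfrak{A}_{N,R}$ as in the displayed statement, the computation you describe does not give zero. Expanding as you say, $\Psi_{P,R}(\Delta(\mi,\mj))=\sum_{\mk\in\Nid^{N}}t_{\mi\mk}\otimes\Delta(\mk,\mj)$, the terms with $\mk$ having a repeated entry do vanish, but the terms with $\mk$ a permutation of the full index set $\Nid$ do \emph{not} cancel: by quantum row antisymmetry each such term is a power of $(-q)$ times the term for the increasing arrangement $\mk_0$ of $\Nid$, and they assemble (quantum Cauchy--Binet) into $\Delta(\mi,\mk_0)\otimes\Delta(\mk_0,\mj)$, which is nonzero in general -- for $P=R=N$ this is the quantum determinant tensored with itself. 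The vanishing mechanism you invoke (every inner multi-index has a repeat) only operates for minors of size at least $N+1$, which is exactly the size used in \cite{Z20} and forced by the classical limit: the relations among the entries of a product of $P\times N$ and $N\times R$ matrices are generated by the $(N+1)\times(N+1)$ minors, while the $N\times N$ minors are generically nonzero on rank-$N$ products. So your claimed ``sign cancellation for $\mk$ running through the permutations of $\Nid$'' is false, and as written your argument would (correctly) show that the size-$N$ minors are \emph{not} in $\Ker\Psi_{P,R}$. To repair the SFT step you must work with $(N+1)$-minors, i.e. $\mi\in\mathfrak{A}_{N+1,P}$, $\mj\in\mathfrak{A}_{N+1,R}$ (note the statement's indexing should be read this way to agree with \cite{LZZ11,Z20}), and adjust the subsequent multiplicity comparison accordingly (the kernel consists of the isotypic components labelled by partitions with more than $N$ rows).
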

	
	\subsection{$\imath$quantum group $\qU_n^\imath$}
	The $\imath$quantum group $\qU_n^\imath$ (of quasi-split type AIII in the sense of Satake diagram) is the subalgebra of $\qU_N$ generated by
	\begin{align*}
		e_i&=E_{i}+K_i^{-1}F_{-i},\quad f_i=E_{-i}+F_{i}K_{-i}^{-1} \quad (i\in \mathbb{I}_n), \quad
		d_a=D_aD_{-a}\quad (0\leq a\in\Nid),
	\end{align*} and in the case of $n\in\frac{1}{2}+\N$, together with
	\begin{align*}
		t&=E_0+qF_0K_0^{-1}+K_0^{-1}\quad\mbox{ (Note: $t$ does not exist if $n\in\N$)}.
	\end{align*}
	It can be checked directly that $\qU_n^\imath$ is a right coideal subalgebra of $\qU_N$, i.e., $\Delta(\qU_n^\imath)\subset \qU_n^\imath\otimes \qU_N$. The standard $\qU_N$-module $\mathbb{V}_N$ is also a $\qU_n^\imath$-module.

	At the specialization $q\rightarrow1$, the $\imath$quantum group $\qU_n^\imath$ becomes to the universal enveloping algebra $U(\mathfrak{gl}_{n^+}\oplus\mathfrak{gl}_{{n^-}})$.
	Let $\Par_{n,d}$ be the set of partitions of $d$ with at most $n$ parts, and let $\Par_{n,d}^\imath:= \bigsqcup_l \Par_{n^+,l} \times \Par_{n^-,d-l}$ be the set of bi-partitions of $d$ with at most $(n^+,n^-)$ parts. Denote $\Par_{n}^\imath:=\bigsqcup_{d}\Par_{n,d}^\imath$. For $\lambda\in\Par_{n}^\imath$, Watanabe \cite{Wa21} introduced the left (resp. right) irreducible highest weight $\qU_n^\imath$-module $L^{n,\imath}_\lambda$ (resp. $L^{n,\imath*}_\lambda$), whose classical limit is the irreducible $(\mathfrak{gl}_{n^+}\oplus\mathfrak{gl}_{{n^-}})$-module with highest weight $\lambda$.
	
\begin{rem}
The $\imath$quantum group $\qU^\imath$ (of quasi-split type AIII) comes with parameters (see \cite{Le02}). But for our purpose (telling a story among the Schur duality, the Howe duality and the invariant theory), the parameters are fixed once for all by its double centralizer property with Hecke algebras of type B (i.e. the $\imath$Schur duality which we shall mention in the next subsection).
\end{rem}
	
	\subsection{$\imath$Schur duality}
	Let $s_0,s_1\ldots,s_{d-1}$ be the simple reflections of the Weyl group $W_{B_d}$ (of type $B_d$). There is a right $W_{B_d}$-action on $\Nid^d$ (also $\Mid^d$, etc.) given by
	\begin{align*}
		\mi s_0=(-i_1,i_2,\ldots,i_d)\qquad\mbox{and}\qquad \mi s_a=(\ldots,i_{a+1},i_a,\ldots)\quad (\forall a=1,2,\ldots,d-1).
	\end{align*}

	The \emph{Hecke algebra} $\HH_{B_d}$ (of type $B_d$) is a $\mathbb{K}$-algebra generated by $T_0, T_1,\ldots,T_{d-1}$ subject to
	\begin{align*}
		&(T_i-q^{-1})(T_i+q)=0,\\
		&T_{i}T_{i+1}T_i=T_{i+1}T_iT_{i+1}\ (i\neq0),\\
		&(T_0T_1)^2=(T_1T_0)^2,\quad T_iT_j=T_jT_i \ (|i-j|>1).
	\end{align*}
	For any $w\in W_{B_d}$ with a reduced form $w=s_{i_1}s_{i_2}\cdots s_{i_\ell}$, denote $T_w:=T_{i_1}T_{i_2}\cdots T_{i_\ell}$.
	
	It is known that $\mathbb{V}_N^{\otimes d}$ admits an $\HH_{B_d}$-action defined as follows:
	\begin{align*}
		v_{\mi}T_0&=\left\{
		\begin{array}{ll}
			v_{\mi s_0}, &\mbox{if $i_1>0$};\\
			q^{-1}v_{\mi s_0}, &\mbox{if $i_1=0$};\\
			v_{\mi s_0}+(q^{-1}-q)v_{\mi}, &\mbox{if $i_1<0$},
		\end{array}
		\right. \quad
		v_{\mi}T_a=\left\{
		\begin{array}{ll}
			v_{\mi s_a}, &\mbox{if $i_a<i_{a+1}$};\\
			q^{-1}v_{\mi s_a}, &\mbox{if $i_a=i_{a+1}$};\\
			v_{\mi s_a}+(q^{-1}-q)v_{\mi}, &\mbox{if $i_a>i_{a+1}$}.
		\end{array}
		\right.
	\end{align*}
	The $\imath$Schur duality (i.e. quantum Schur duality of type B) says that $\qU_n^\imath$ and $\HH_{B_d}$ admit a double centralizer property on $\mathbb{V}^{\otimes d}$ (see \cite{BW18, BWW18}).
	\subsection{$\imath$Howe duality}\label{iH}
	
	Let $\mathcal{V}_{n,m}^\imath=\mathcal{V}_{N,M}/\mathcal{I}_{n,m}$
	where $\mathcal{I}_{n,m}$ is the right ideal of $\mathcal{V}_{N,M}$ generated by, $(0<i \in \Nid, 0<j \in \Mid)$,
\begin{equation}\label{rel:tij}
\begin{array}{l}
t_{ij}-t_{-i,-j}+(q^{-1}-q)t_{i,-j}, \quad t_{i,-j}-t_{-i,j},\qquad \mbox{and}\\
t_{i,0}-qt_{-i,0} \ \mbox{(in the case of $m\in\mathbb{Z}$)},\quad  t_{0,j}-qt_{0,-j}\  \mbox{(in the case of $n\in \mathbb{Z}$)}.
\end{array}
\end{equation}
We denote the image of $f\in \mathcal{V}_{N,M}$ in $\mathcal{V}_{n,m}^\imath$ by $\widetilde{f}$.
It holds that $x\widetilde{f}y=\widetilde{xfy}$ for any $f \in \mathcal{V}_{N,M}, x \in \qU_m^\imath, y \in \qU_n^\imath$. When $n=m$, the right ideal $\mathcal{I}_{n,n}$ consists of the functions annihilating $\qU_n^\imath$, which coincides with the one introduced in \cite[Proposition~2.4.4]{LNX22}.
	
At the classical limit $q \to 1$, the quantum coordinate algebra $\mathcal{V}_{N,M}$ specializes to $\fC[\mathrm{Mat}_{N \times M}]$ and the right ideal $\mathcal{I}_{n,m}$ specializes to the ideal associated with the subvariety $\mathrm{Mat}_{N \times M}^\theta:= \{(a_{ij}) \in \mathrm{Mat}_{N \times M} ~|~ a_{ij}=a_{-i,-j}\}$. Therefore, the quotient $\mathcal{V}_{n,m}^\imath$ specializes to the algebra $$\fC[\mathrm{Mat}_{N \times M}^\theta] \simeq \fC[\mathrm{Mat}_{n^+ \times m^+}] \oplus \fC[\mathrm{Mat}_{n^- \times m^-}].$$
	
The following lemma, which will be employed in the proof of Lemma~\ref{dual},  can be computed by \eqref{rel:tij} directly.
	\begin{lem}\label{com}
		For $\mi \in \Nid^d, \mj \in \Mid^d$ and $0< a< d$, we have
$$\widetilde{t}_{\mi s_0,\mj s_0}=
		\begin{cases}
			\widetilde{t}_{\mi\mj}+(q-q^{-1})\widetilde{t}_{\mi,\mj s_0} & \mbox{if $i_1,j_1>0$},\\
			\widetilde{t}_{\mi\mj} & \mbox{if $i_1>0>j_1$},\\
			{q^{-1}}\widetilde{t}_{\mi\mj} & \mbox{if $i_1>0=j_1$ or $i_1=0<j_1$;}
		\end{cases}$$
		$$\widetilde{t}_{\mi s_a,\mj s_a}=
		\begin{cases}
			\widetilde{t}_{\mi\mj}+(q-q^{-1})\widetilde{t}_{\mi,\mj s_a} & \mbox{if $i_a<i_{a+1}, j_a<i_{a+1}$},\\
			\widetilde{t}_{\mi\mj} & \mbox{if $i_a<i_{a+1}, j_{a+1}<j_a$},\\
			{q^{-1}}\widetilde{t}_{\mi\mj} & \mbox{if $i_a<i_{a+1}, j_a=j_{a+1}$ or $i_a=i_{a+1}, j_a<j_{a+1}$.}
		\end{cases}$$
	\end{lem}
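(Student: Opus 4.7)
My plan is to handle the two displayed families of identities separately, since $s_0$ acts only on the first tensor slot while $s_a$ (for $0 < a < d$) swaps adjacent positions $a$ and $a+1$. In each case the identity reduces to a local statement about the factor(s) that actually change, multiplied on the right by the unchanged tail; because $\mathcal{I}_{n,m}$ is a right ideal, any local identity modulo $\mathcal{I}_{n,m}$ extends to the full product after right-multiplying by the remaining $\widetilde{t}_{i_b j_b}$'s. Thus in both parts I may strip away all factors except the ones affected by the swap.

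For the $s_0$ identities the task reduces to relating $\widetilde{t}_{-i_1, -j_1}$ to $\widetilde{t}_{i_1, j_1}$ and $\widetilde{t}_{i_1, -j_1}$, and the three listed subcases match exactly the three families of generators of $\mathcal{I}_{n,m}$ in \eqref{rel:tij}. When $i_1, j_1 > 0$, the first generator (with parameters $i = i_1, j = j_1$) rewrites $\widetilde{t}_{-i_1, -j_1}$ as a $\K$-linear combination of $\widetilde{t}_{i_1, j_1}$ and $\widetilde{t}_{i_1, -j_1}$. When $i_1 > 0 > j_1$, the second generator with parameters $(i_1, -j_1)$ yields $\widetilde{t}_{-i_1, -j_1} = \widetilde{t}_{i_1, j_1}$. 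When exactly one of $i_1, j_1$ vanishes (so that the corresponding ambient parameter $m$ or $n$ is an integer), the relevant generator from the second line of \eqref{rel:tij} gives $\widetilde{t}_{-i_1, 0} = q^{-1}\widetilde{t}_{i_1, 0}$ or $\widetilde{t}_{0, -j_1} = q^{-1}\widetilde{t}_{0, j_1}$. Multiplying on the right by $\widetilde{t}_{i_2 j_2} \cdots \widetilde{t}_{i_d j_d}$ delivers each of the three asserted formulas.

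For $s_a$ with $a > 0$, the identity already holds in $\mathcal{V}_{N,M}$ prior to quotienting, and then simply descends to $\mathcal{V}^\imath_{n,m}$. Here only the consecutive product $t_{i_{a+1}, j_{a+1}} t_{i_a, j_a}$ differs between $t_{\mi s_a, \mj s_a}$ and $t_{\mi\mj}$, and the relevant quantum matrix relation is selected by comparing $(i_a, j_a)$ with $(i_{a+1}, j_{a+1})$. The \emph{strictly increasing} subcase $i_a < i_{a+1},\ j_a < j_{a+1}$ uses $t_{ij}t_{kl} = t_{kl}t_{ij} + (q - q^{-1}) t_{il} t_{kj}$ and yields the $(q-q^{-1})$-correction term $\widetilde{t}_{\mi, \mj s_a}$. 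The \emph{mixed} subcase with $j_{a+1} < j_a$ uses the commutation relation $t_{il}t_{kj} = t_{kj}t_{il}$ to get pure equality. The two equal-index subcases use $t_{ij}t_{kj} = q\, t_{kj}t_{ij}$ or $t_{ij}t_{il} = q\, t_{il}t_{ij}$ and, once solved for $t_{i_{a+1}, j_{a+1}} t_{i_a, j_a}$, produce the scalar $q^{-1}$.

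I do not anticipate a substantive obstacle: the whole proof is a case-by-case unwinding of the generators of $\mathcal{I}_{n,m}$ listed in \eqref{rel:tij} together with the defining relations of $\mathcal{V}_{N,M}$. The only care needed is bookkeeping --- tracking the sign convention $(q^{-1}-q) = -(q-q^{-1})$ coming from \eqref{rel:tij}, and picking out the correct quantum matrix relation from the ordering of the pairs $(i_a, j_a)$ and $(i_{a+1}, j_{a+1})$. The cases not listed in the statement (e.g.\ $i_a > i_{a+1}$) are not required for the intended application in Lemma~\ref{dual} and can be handled by an analogous argument if needed.
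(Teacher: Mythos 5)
Your proposal is correct and coincides with the paper's own treatment: the paper gives nothing beyond ``can be computed by \eqref{rel:tij} directly'', and your case-by-case reduction --- the right-ideal generators of $\mathcal{I}_{n,m}$ applied to the first factor for $s_0$ (legitimate precisely because $\mathcal{I}_{n,m}$ is a right ideal), and the defining relations of $\mathcal{V}_{N,M}$ applied to the two adjacent factors for $s_a$, $0<a<d$ --- is exactly that computation. One bookkeeping caveat: with the relations exactly as printed, your computation yields $\widetilde{t}_{\mi s_0,\mj s_0}=\widetilde{t}_{\mi\mj}+(q^{-1}-q)\widetilde{t}_{\mi,\mj s_0}$ and $\widetilde{t}_{\mi s_a,\mj s_a}=\widetilde{t}_{\mi\mj}+(q^{-1}-q)\widetilde{t}_{\mi,\mj s_a}$ in the first cases, i.e.\ coefficient $(q^{-1}-q)$ rather than the printed $(q-q^{-1})$; since the $(q^{-1}-q)$ version is the one actually invoked in the proof of Lemma~\ref{dual}, the sign in the lemma's statement is evidently a typo, so do not force your ``sign tracking'' to reproduce the printed coefficient.
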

	
	We define $\mathcal{V}_{n,m}^\imath$ in \cite{LX22} by a different but essentially equivalent way. It has been shown in \cite[Proposition~5.5]{LX22} that $\mathcal{V}_{n,m}^\imath$ is a $(\qU_m^\imath, \qU_n^\imath)$-module. Following is a direct corollary of Lemma~\ref{lem:xty}.
	\begin{lem}\label{iope}
		Let $x \in \qU_m^\imath$, $y \in \qU_n^\imath$, $\mi \in \Nid^d$, $\mj \in \Mid^d$. We have
		\begin{align*}
			x\widetilde{t}_{\mi\mj}&=\sum_{\mk \in \Mid^d} x_{\mk\mj}\widetilde{t}_{\mi\mk}, \quad \widetilde{t}_{\mi\mj}y=\sum_{\mk \in \Nid^d} y_{\mi\mk}\widetilde{t}_{\mk\mj}.
		\end{align*}
	\end{lem}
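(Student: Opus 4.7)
The plan is to obtain both identities by lifting them to the ambient quantum coordinate algebra $\mathcal{V}_{N,M}$, applying Lemma~\ref{lem:xty} there, and then descending along the quotient map $f \mapsto \widetilde{f}: \mathcal{V}_{N,M} \to \mathcal{V}_{n,m}^\imath$. Since $\qU_m^\imath$ sits inside $\qU_M$ and $\qU_n^\imath$ sits inside $\qU_N$, any $x \in \qU_m^\imath$ and $y \in \qU_n^\imath$ are a fortiori elements of $\qU_M$ and $\qU_N$ respectively, so the scalars $x_{\mk\mj}$ and $y_{\mi\mk}$ (computed from the iterated coproduct inside $\qU_M$ and $\qU_N$, via the pairing with matrix coefficients) are perfectly well-defined elements of $\K$.

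With that in place, Lemma~\ref{lem:xty} yields the identities
$$xt_{\mi\mj}=\sum_{\mk \in \Mid^d} x_{\mk\mj}\, t_{\mi\mk}, \qquad t_{\mi\mj}y=\sum_{\mk \in \Nid^d} y_{\mi\mk}\, t_{\mk\mj}$$
in $\mathcal{V}_{N,M}$. I would then apply the quotient map $f \mapsto \widetilde{f}$ to both sides, invoking the already-stated compatibility $x\widetilde{f}y = \widetilde{xfy}$ for $x \in \qU_m^\imath$, $y \in \qU_n^\imath$, $f \in \mathcal{V}_{N,M}$. This is legitimate because $\mathcal{I}_{n,m}$ is preserved by the right $\qU_N$-action (it is defined as a right ideal), hence \emph{a fortiori} by the right $\qU_n^\imath$-action, and it is preserved by the left $\qU_m^\imath$-action by \cite[Proposition~5.5]{LX22}, which is what makes the left $\qU_m^\imath$-action on $\mathcal{V}_{n,m}^\imath$ well-defined. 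The two identities in the lemma then drop out immediately.

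There is essentially no obstacle here beyond bookkeeping: the lemma is a direct transport of Lemma~\ref{lem:xty} through a well-defined quotient. The only subtle point worth flagging explicitly is that, although $\qU_m^\imath$ is only a \emph{right} coideal subalgebra of $\qU_M$ (so $\Delta(x) \in \qU_m^\imath \otimes \qU_M$ rather than $\qU_m^\imath \otimes \qU_m^\imath$), this does not interfere with the computation of the coefficients $x_{\mk\mj} = \prod_{a=1}^d \langle t_{k_aj_a}, x_{(a)}\rangle$, since those are evaluated entirely inside $\qU_M$.
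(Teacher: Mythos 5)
Your overall route is exactly the paper's: Lemma~\ref{iope} is obtained by applying Lemma~\ref{lem:xty} in $\mathcal{V}_{N,M}$ and pushing the resulting identities through the quotient map $f\mapsto\widetilde{f}$, using the compatibility $x\widetilde{f}y=\widetilde{xfy}$ for $x\in\qU_m^\imath$, $y\in\qU_n^\imath$ (the paper literally presents the lemma as a direct corollary of Lemma~\ref{lem:xty}), and your remark about the coefficients $x_{\mk\mj}$, $y_{\mi\mk}$ being computed inside $\qU_M$, $\qU_N$ is the right observation.

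One of your supporting justifications, however, is wrong and should be deleted: you claim that $\mathcal{I}_{n,m}$ is preserved by the right $\qU_N$-action ``because it is defined as a right ideal.'' Being a right ideal refers to multiplication in the algebra $\mathcal{V}_{N,M}$, which has nothing to do with the right $\qU_N$-module action given by \eqref{action:xtij}; these are different structures, so no ``a fortiori'' argument is available. In fact $\mathcal{I}_{n,m}$ is \emph{not} stable under the full right $\qU_N$-action: at $q\to 1$ it is the ideal of the subvariety $\mathrm{Mat}_{N\times M}^{\theta}$, which is not stable under the full $GL_N$-action but only under the fixed subgroup of the involution; correspondingly $\mathcal{V}_{n,m}^\imath$ is only a right $\qU_n^\imath$-module, not a $\qU_N$-module. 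The stability you actually need (under the right $\qU_n^\imath$-action and the left $\qU_m^\imath$-action) is precisely the content of \cite[Proposition~5.5]{LX22}, equivalently of the identity $x\widetilde{f}y=\widetilde{xfy}$ already recorded in the paper, which you do invoke; with the erroneous sentence removed and the descent justified solely by that citation, your proof is complete and coincides with the paper's.
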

	
	Noting that $\mathcal{V}_{N,M}$ is an $\N$-graded algebra (by setting $\mathrm{deg}(t_{ij})=1$) and $\mathcal{I}_{n,m}$ is a homogeneous right ideal of $\mathcal{V}_{N,M}$. Hence $\mathcal{V}_{n,m}^\imath$ is an $\N$-graded $\mathcal{V}_{N,M}$-module, whose homogeneous component of degree $d$ is denoted by $\mathcal{V}_{n,m,d}^\imath$.
	
	It was showed in \cite[Proposition~5.5]{LX22} that $\mathcal{V}_{n,m}^\imath$ is a $(\qU^\imath_m,\qU^\imath_n)$-module. In fact, the formulas therein show that its homogeneous component $\mathcal{V}_{n,m,d}^\imath$ is a $(\qU^\imath_m,\qU^\imath_n)$-module, too. Moreover, it was proved in \cite[Theorem~5.7]{LX22} that as $(\qU^\imath_m,\qU^\imath_n)$-modules, \begin{equation}\label{eq:iso}
		\mathcal{V}_{n,m,d}^\imath\simeq\Hom_{\HH_{B_d}}(\mathbb{V}_N^{\otimes d},\mathbb{V}_M^{\otimes d}).
	\end{equation}
	Following is the $\imath$Howe duality.
	\begin{thm}\cite[Theorem~6.4]{LX22}
		The $(\qU_m^\imath,\qU_n^\imath)$-module $\mathcal{V}_{n,m,d}^\imath$ admits the following multiplicity-free decomposition:
		\begin{align}\label{eq:iHowe}
			\mathcal{V}_{n,m,d}^\imath \simeq\Hom_{\HH_{B_d}}(\mathbb{V}_N^{\otimes d},\mathbb{V}_M^{\otimes d})\simeq \bigoplus_{\lambda \in \Par_{\min\{m,n\},d}^\imath} L_\lambda^{m,\imath} \otimes L_\lambda^{n,\imath*}.
		\end{align}
	\end{thm}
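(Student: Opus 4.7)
The plan is to build on the first isomorphism $\mathcal{V}_{n,m,d}^\imath \simeq \Hom_{\HH_{B_d}}(\mathbb{V}_N^{\otimes d},\mathbb{V}_M^{\otimes d})$ already recorded in \eqref{eq:iso} (from \cite[Theorem~5.7]{LX22}), so that only the second isomorphism needs argument. For this I would exploit the $\imath$Schur duality on each side and then compute an $\HH_{B_d}$-equivariant Hom space.

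First I would invoke the $\imath$Schur duality \cite{BW18, BWW18} separately for the pairs $(\qU_n^\imath, \HH_{B_d})$ acting on $\mathbb{V}_N^{\otimes d}$ and $(\qU_m^\imath, \HH_{B_d})$ acting on $\mathbb{V}_M^{\otimes d}$. Since $\K=\mathbb{C}(q)$ with $q$ generic, the Hecke algebra $\HH_{B_d}$ is semisimple with irreducible modules $\{S_\lambda\}$ indexed by bi-partitions of $d$. The double centralizer property then yields $(\qU_n^\imath,\HH_{B_d})$-bimodule and $(\qU_m^\imath,\HH_{B_d})$-bimodule decompositions
\begin{align*}
\mathbb{V}_N^{\otimes d} &\simeq \bigoplus_{\lambda \in \Par_{n,d}^\imath} L_\lambda^{n,\imath} \otimes S_\lambda, \qquad
\mathbb{V}_M^{\otimes d} \simeq \bigoplus_{\mu \in \Par_{m,d}^\imath} L_\mu^{m,\imath} \otimes S_\mu.
\end{align*}

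Next, applying $\Hom_{\HH_{B_d}}(-,-)$ and using Schur's lemma (so that $\Hom_{\HH_{B_d}}(S_\lambda, S_\mu) \simeq \delta_{\lambda,\mu}\K$), I get
\begin{align*}
\Hom_{\HH_{B_d}}(\mathbb{V}_N^{\otimes d}, \mathbb{V}_M^{\otimes d})
&\simeq \bigoplus_{\lambda,\mu} \Hom_{\K}(L_\lambda^{n,\imath}, L_\mu^{m,\imath}) \otimes \Hom_{\HH_{B_d}}(S_\lambda, S_\mu)\\
&\simeq \bigoplus_{\lambda} L_\lambda^{m,\imath} \otimes L_\lambda^{n,\imath *},
\end{align*}
where the summation runs over those $\lambda$ appearing in both decompositions, namely $\lambda \in \Par_{n,d}^\imath \cap \Par_{m,d}^\imath$; since $\Par_{k,d}^\imath$ is the set of bi-partitions of $d$ with at most $(k^+,k^-)$ parts, this intersection equals $\Par_{\min\{m,n\},d}^\imath$ as claimed.

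The principal technical point, and what I anticipate to be the main obstacle, is matching up the left/right module conventions. The Hom-space $\Hom_{\HH_{B_d}}(\mathbb{V}_N^{\otimes d},\mathbb{V}_M^{\otimes d})$ inherits a left $\qU_m^\imath$-action from $\mathbb{V}_M^{\otimes d}$ and a right $\qU_n^\imath$-action from the left action on $\mathbb{V}_N^{\otimes d}$, and one must verify that the factor associated to $L_\lambda^{n,\imath}$ under the above Schur-lemma computation is precisely $L_\lambda^{n,\imath*}$ in Watanabe's sense. Moreover, one should check that the $(\qU_m^\imath,\qU_n^\imath)$-bimodule structure transported through the isomorphism \eqref{eq:iso} coincides with the one described by Lemma~\ref{iope}; this reduces to a direct comparison of action formulas on matrix coefficients $\widetilde{t}_{\mi\mj}$, which is routine once the bimodule structures are pinned down.
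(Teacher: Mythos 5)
The paper itself contains no proof of this theorem---it is quoted verbatim from \cite{LX22}---and your derivation (first isomorphism taken from \eqref{eq:iso}, then the $\imath$Schur dualities on $\mathbb{V}_N^{\otimes d}$ and $\mathbb{V}_M^{\otimes d}$ together with semisimplicity of $\HH_{B_d}$ over $\K=\mathbb{C}(q)$ and Schur's lemma to compute $\Hom_{\HH_{B_d}}(\mathbb{V}_N^{\otimes d},\mathbb{V}_M^{\otimes d})$) is exactly the standard route by which the cited result is obtained, and it is the same double-centralizer argument the present paper itself deploys in the proofs of Lemmas~\ref{fd} and~\ref{im}. The remaining points you flag---that the right $\qU_n^\imath$-action by precomposition yields $L_\lambda^{n,\imath*}$ in Watanabe's sense, and that the bimodule structure transported through \eqref{eq:iso} agrees with Lemma~\ref{iope}---are indeed routine bookkeeping, so your proposal is correct and essentially coincides with the intended proof.
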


	\subsection{The dual space $\mathcal{V}_{n,m,d}^{\imath *}$}\label{subsecV}
	
	Let $\mathcal{V}_{n,m,d}^{\imath *}$ denote the dual space of $\mathcal{V}_{n,m,d}^{\imath}$, which is naturally equipped with a $(\qU_n^\imath,\qU_m^\imath)$-module structure. It follows from \eqref{eq:iHowe} that as $(\qU_n^\imath,\qU_m^\imath)$-modules,
	\begin{align}\label{dec:dual}
		\mathcal{V}_{n,m,d}^{\imath *}\simeq\Hom_{\HH_{B_d}}(\mathbb{V}_M^{\otimes d},\mathbb{V}_N^{\otimes d})\simeq \bigoplus_{\lambda \in \Par_{\min\{m,n\},d}^\imath} L_\lambda^{n,\imath} \otimes L_\lambda^{m,\imath*}.
	\end{align}
	Below we shall give the isomorphism explicitly.
	
	Let $\rho_{n,m,d}$ be the linear map $$\rho_{n,m,d}: \mathcal{V}_{n,m,d}^{\imath *} \rightarrow \Hom_\K(\mathbb{V}_M^{\otimes d}, \mathbb{V}_N^{\otimes d})\quad\mbox{via}\quad \rho_{n,m,d}(\alpha)v_\mj=\sum_{\mi \in \Nid^d} \langle \alpha,\widetilde{t}_{\mi\mj} \rangle v_\mi.$$

	\begin{lem}\label{dual}
		The map $\rho_{n,m,d}$ is an injective $(\qU_n^\imath,\qU_m^\imath)$-module homomorphism with $\mathrm{Im}(\rho_{n,m,d})=\Hom_{\HH_{B_d}}(\mathbb{V}_M^{\otimes d}, \mathbb{V}_N^{\otimes d})$.
	\end{lem}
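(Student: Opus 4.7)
The plan is to establish four properties of $\rho_{n,m,d}$ in order: injectivity, $(\qU_n^\imath,\qU_m^\imath)$-equivariance, containment of the image in the $\HH_{B_d}$-equivariant Hom space, and equality with that Hom space via a dimension count. Injectivity is immediate: since $\mathcal{V}_{n,m,d}^\imath$ is spanned by the elements $\widetilde{t}_{\mi\mj}$, any $\alpha \in \mathcal{V}_{n,m,d}^{\imath*}$ is determined by the scalars $\langle\alpha,\widetilde{t}_{\mi\mj}\rangle$, and these are precisely the coefficients appearing when $\rho_{n,m,d}(\alpha)(v_\mj)$ is expanded in the basis $\{v_\mi\}$.

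For the bimodule equivariance, I would use that the dual action is given by $\langle x\alpha y,\widetilde{t}_{\mi\mj}\rangle = \langle\alpha, y\widetilde{t}_{\mi\mj}x\rangle$ for $x \in \qU_n^\imath$ and $y \in \qU_m^\imath$. Two applications of Lemma~\ref{iope} yield $y\widetilde{t}_{\mi\mj}x = \sum_{\mk,\ml} x_{\mi\mk}\, y_{\ml\mj}\,\widetilde{t}_{\mk\ml}$, so
\begin{equation*}
\rho_{n,m,d}(x\alpha y)(v_\mj) = \sum_{\mi,\mk,\ml} x_{\mi\mk}\, y_{\ml\mj}\, \langle\alpha,\widetilde{t}_{\mk\ml}\rangle\, v_\mi.
\end{equation*}
Expanding $x\rho_{n,m,d}(\alpha)(yv_\mj)$ via $yv_\mj = \sum_\ml y_{\ml\mj}v_\ml$ and $xv_\mk = \sum_\mi x_{\mi\mk}v_\mi$ produces the same triple sum, confirming equivariance with respect to both factors.

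The main obstacle is showing that the image lies in $\Hom_{\HH_{B_d}}(\mathbb{V}_M^{\otimes d},\mathbb{V}_N^{\otimes d})$. Writing $v_\mj T_a = \sum_\mk c^a_{\mj\mk}\, v_\mk$ in $\mathbb{V}_M^{\otimes d}$ and $v_\mi T_a = \sum_\ml d^a_{\mi\ml}\, v_\ml$ in $\mathbb{V}_N^{\otimes d}$, the condition $\rho_{n,m,d}(\alpha)(v_\mj T_a) = \rho_{n,m,d}(\alpha)(v_\mj)T_a$ for arbitrary $\alpha$ reduces, by unpacking the definitions and matching coefficients of $v_\mi$, to the identity
\begin{equation*}
\sum_\mk c^a_{\mj\mk}\,\widetilde{t}_{\mi\mk} \;=\; \sum_\ml d^a_{\mi\ml}\,\widetilde{t}_{\ml\mj} \qquad \text{in } \mathcal{V}_{n,m,d}^\imath
\end{equation*}
for every $\mi \in \Nid^d$, $\mj \in \Mid^d$, and $a \in \{0,1,\ldots,d-1\}$. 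This identity fails in $\mathcal{V}_{N,M}$ itself and relies crucially on passing to the quotient by $\mathcal{I}_{n,m}$. The verification is a case-by-case check pairing the explicit $T_a$-action formulas against the relations $\widetilde{t}_{\mi s_a,\mj s_a} = \cdots$ supplied by Lemma~\ref{com}, exhausting the subcases according to signs and coincidences among $i_1,j_1$ (for $a=0$) and among $i_a,i_{a+1},j_a,j_{a+1}$ (for $a>0$).

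Surjectivity onto $\Hom_{\HH_{B_d}}(\mathbb{V}_M^{\otimes d},\mathbb{V}_N^{\otimes d})$ then follows by dimension count: by \eqref{eq:iso}, $\dim \mathcal{V}_{n,m,d}^{\imath*} = \dim \Hom_{\HH_{B_d}}(\mathbb{V}_N^{\otimes d},\mathbb{V}_M^{\otimes d})$, and since $\HH_{B_d}$ is split semisimple over $\K = \fC(q)$, this equals $\dim \Hom_{\HH_{B_d}}(\mathbb{V}_M^{\otimes d},\mathbb{V}_N^{\otimes d})$. Combined with injectivity and the just-proven containment, this forces the image of $\rho_{n,m,d}$ to exhaust $\Hom_{\HH_{B_d}}(\mathbb{V}_M^{\otimes d},\mathbb{V}_N^{\otimes d})$.
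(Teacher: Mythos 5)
Your proposal follows essentially the same route as the paper: injectivity by reading off the coefficients $\langle\alpha,\widetilde{t}_{\mi\mj}\rangle$, bi-equivariance via two applications of Lemma~\ref{iope}, containment of the image in $\Hom_{\HH_{B_d}}(\mathbb{V}_M^{\otimes d},\mathbb{V}_N^{\otimes d})$ by a case-by-case check of compatibility with each $T_a$ using the explicit Hecke action together with Lemma~\ref{com}, and surjectivity by the dimension count coming from \eqref{eq:iso}. The only quibble is a transposed index in your displayed identity: matching the coefficient of $v_\mi$ gives $\sum_\mk c^a_{\mj\mk}\,\widetilde{t}_{\mi\mk}=\sum_\ml d^a_{\ml\mi}\,\widetilde{t}_{\ml\mj}$ (the Hecke action matrix is not symmetric), but this slip does not affect the structure or validity of the argument.
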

	\begin{proof}
		Let $\alpha \in \mathcal{V}_{n,m,d}^{\imath*}$ be a nonzero element. There exists $\mi \in \Nid^d$ and $\mj \in \Mid^d$ such that $\langle \alpha, \widetilde{t}_{\mi\mj}\rangle \neq 0$ and hence $\rho_{n,m,d}(\alpha) v_\mj \neq 0$. Thus $\rho_{n,m,d}$ is injective.
		
		For any $\mj \in \Mid^d$, $x \in \qU_n^\imath$ and $y \in \qU_m^\imath$, we compute
		\begin{align*}
			\rho_{n,m,d}(x\alpha y)v_\mj&=\sum_{\mi \in \Nid^d} \langle x\alpha y,\widetilde{t}_{\mi\mj} \rangle v_\mi=\sum_{\mi \in \Nid^d} \langle \alpha,y\widetilde{t}_{\mi\mj} x \rangle v_\mi=\sum_{\mi,\mk \in \Nid^d, \ml \in \Mid^d} \langle \alpha,x_{\mi\mk}y_{\ml\mj}\widetilde{t}_{\mk\ml} \rangle v_\mi\\&=\sum_{\mi,\mk \in \Nid^d, \ml \in \Mid^d} x_{\mi\mk}y_{\ml\mj}\langle \alpha,\widetilde{t}_{\mk\ml}\rangle v_\mi=\sum_{\mk\in\Nid^d,\ml\in\Mid^d}y_{\ml\mj}\langle \alpha,\widetilde{t}_{\mk\ml}\rangle x v_\mk\\&=\sum_{\ml\in\Mid^d}y_{\ml\mj}x\left(\rho_{n,m,d}(\alpha)v_\ml\right)=
			x\left(\rho_{n,m,d}(\alpha)(yv_\mj)\right),
		\end{align*}
		where the third equality holds by Lemma~\ref{iope}.
		So $\rho_{n,m,d}$ is a $(\qU_n^\imath,\qU_m^\imath)$-module homomorphism.
		
		Now let us check the statement $\mathrm{Im}(\rho_{n,m,d})=\Hom_{\HH_{B_d}}(\mathbb{V}_M^{\otimes d}, \mathbb{V}_N^{\otimes d})$.
		
		If $\mj <\mj s_a$, then
		\begin{align*}
			&(\rho_{n,m,d}(\alpha)v_\mj)T_a=\sum_{\mi \in \Nid^d} \langle \alpha,\widetilde{t}_{\mi\mj} \rangle v_\mi T_a\\
			=&\sum_{\mi \in \Nid^d, \mi<\mi s_a} \langle \alpha,\widetilde{t}_{\mi\mj} \rangle v_{\mi s_a}+\sum_{\mi \in \Nid^d, \mi=\mi s_a} q^{-1}\langle \alpha,\widetilde{t}_{\mi\mj} \rangle v_\mi+\sum_{\mi \in \Nid^d, \mi>\mi s_a} \langle \alpha,\widetilde{t}_{\mi\mj} \rangle (v_{\mi s_a}+(q^{-1}-q)v_\mi)\\
			=&\sum_{\mi \in \Nid^d, \mi<\mi s_a} \langle \alpha,\widetilde{t}_{\mi,\mj s_a} \rangle v_\mi+\sum_{\mi \in \Nid^d, \mi=\mi s_a} \langle \alpha,\widetilde{t}_{\mi,\mj s_a} \rangle v_\mi+\sum_{\mi \in \Nid^d, \mi<\mi s_a} (\langle \alpha,(q^{-1}-q)\widetilde{t}_{\mi\mj} +\widetilde{t}_{\mi s_a,\mj} \rangle) v_\mi\\
			=&\sum_{\mi \in \Nid^d} \langle \alpha,\widetilde{t}_{\mi,\mj s_a} \rangle v_\mi=\rho_{n,m,d}(\alpha)(v_\mj T_a).
		\end{align*}
		If $\mj =\mj s_a$, then
		\begin{align*}
			&(\rho_{n,m,d}(\alpha)v_\mj)T_a=\sum_{\mi \in \Nid^d} \langle \alpha,\widetilde{t}_{\mi\mj} \rangle v_\mi T_a\\
			=&\sum_{\mi \in \Nid^d, \mi<\mi s_a} \langle \alpha,\widetilde{t}_{\mi\mj} \rangle v_{\mi s_a}+\sum_{\mi \in \Nid^d, \mi=\mi s_a} q^{-1}\langle \alpha,\widetilde{t}_{\mi\mj} \rangle v_\mi+\sum_{\mi \in \Nid^d, \mi>\mi s_a} \langle \alpha,\widetilde{t}_{\mi\mj} \rangle (v_{\mi s_a}+(q^{-1}-q)v_\mi)\\
			=&\sum_{\mi \in \Nid^d} q^{-1}\langle \alpha,\widetilde{t}_{\mi\mj} \rangle v_\mi=(\rho_{n,m,d}(\alpha)v_\mj T_a).
		\end{align*}
		If $\mj >\mj s_a$, then
		\begin{align*}
			&(\rho_{n,m,d}(\alpha)v_\mj)T_a=(\rho_{n,m,d}(\alpha)v_{\mj s_a} T_a)T_a=(\rho_{n,m,d}(\alpha)v_{\mj s_a})T_a^2\\
			=&(\rho_{n,m,d}(\alpha)v_{\mj s_a})\left((q^{-1}-q)T_a +1\right)=\rho_{n,m,d}(\alpha)\left(v_{\mj s_a}((q^{-1}-q)T_a +1)\right)\\
			=&\rho_{n,m,d}(\alpha)(v_\mj T_a).
		\end{align*}
		Therefore, the image $\mathrm{Im}(\rho_{n,m,d})\subset \Hom_{\HH_{B_d}}(\mathbb{V}_M^{\otimes d}, \mathbb{V}_N^{\otimes d})$. The isomorphism \eqref{eq:iso} tells us $$\dim(\mathcal{V}_{n,m,d}^{\imath*})=\dim(\mathcal{V}_{n,m,d}^{\imath})=\dim(\Hom_{\HH_{B_d}}(\mathbb{V}_N^{\otimes d}, \mathbb{V}_M^{\otimes d}))=\dim(\Hom_{\HH_{B_d}}(\mathbb{V}_M^{\otimes d}, \mathbb{V}_N^{\otimes d})).$$ Thanks to the injectivity of $\rho_{n,m,d}$, we get $\mathrm{Im}(\rho_{n,m,d})=\Hom_{\HH_{B_d}}(\mathbb{V}_M^{\otimes d}, \mathbb{V}_N^{\otimes d})$.
	\end{proof}
	
	
\section{Invariant theory for $\qU_n^\imath$}
	\subsection{$\qU_n^\imath$-invariants}
	Let $\mathcal{P}_{p,r}^\imath:=\mathcal{V}_{p,n}^\imath \otimes \mathcal{V}_{n,r}^\imath$, which is a $(\qU_n^\imath \otimes \qU_r^\imath, \qU_p^\imath \otimes \qU_n^\imath)$-module.
	As defined in \eqref{def:itensor}, we denote the $\qU_n^\imath$-invariant spaces by
	\begin{align*}
		\mathcal{X}_{p,r}^\imath&:=\{ \sum f \otimes g \in \mathcal{P}_{p,r}^\imath ~|~ \sum (xf) \otimes g=\sum f \otimes (gx), \forall x \in \qU_n^\imath\},\\
		\mathcal{X}_{p,r,d}^\imath&:=\{ \sum f \otimes g \in \mathcal{V}_{p,n,d}^\imath \otimes \mathcal{V}_{n,r,d}^\imath ~|~ \sum (xf) \otimes g=\sum f \otimes (gx), \forall x \in \qU_n^\imath\}.
	\end{align*}

	\begin{lem}\label{fd}
		The invariant spaces $\mathcal{X}_{p,r}^\imath$ and $\mathcal{X}_{p,r,d}^\imath$ admit the following multiplicity-free decompositions of $(\qU_r^\imath,\qU_p^\imath)$-modules:
		\begin{align*}
			\mathcal{X}_{p,r}^\imath \simeq \bigoplus_{\lambda \in \Par_{\min\{n,p,r\}}^\imath} L_\lambda^{r,\imath} \otimes L_\lambda^{p,\imath*},\qquad
			\mathcal{X}_{p,r,d}^\imath \simeq \bigoplus_{\lambda \in \Par_{\min\{n,p,r\},d}^\imath} L_\lambda^{r,\imath} \otimes L_\lambda^{p,\imath*}.
		\end{align*}
		As a consequence, $\mathcal{X}_{p,r}^\imath \simeq \bigoplus_{d=0}^\infty\mathcal{X}_{p,r,d}^\imath$.
	\end{lem}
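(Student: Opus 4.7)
The plan is to reduce the lemma to the $\imath$Howe duality decompositions \eqref{eq:iHowe} and \eqref{dec:dual} via a ``Hom-equals-invariants'' identification followed by Schur's lemma.

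First, I would observe that $\mathcal{V}_{p,n}^\imath$ and $\mathcal{V}_{n,r}^\imath$ are both $\N$-graded and that the $\qU_n^\imath$-action preserves these gradings (immediate from Lemma~\ref{iope}). Consequently the invariance condition respects the induced bi-grading on $\mathcal{P}_{p,r}^\imath$, so
$$\mathcal{X}_{p,r}^\imath=\bigoplus_{d_1,d_2\geq 0}(\mathcal{V}_{p,n,d_1}^\imath\otimes \mathcal{V}_{n,r,d_2}^\imath)^{\qU_n^\imath}.$$

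Next, I would identify each bi-homogeneous piece with a Hom space. Given $\sum_i f_i\otimes g_i$ in $\mathcal{V}_{p,n,d_1}^\imath\otimes \mathcal{V}_{n,r,d_2}^\imath$, define the linear map $\phi:\mathcal{V}_{n,r,d_2}^{\imath*}\to \mathcal{V}_{p,n,d_1}^\imath$ by $\phi(\alpha)=\sum_i\langle\alpha,g_i\rangle f_i$. A direct check shows that the condition $\sum_i (xf_i)\otimes g_i=\sum_i f_i\otimes (g_ix)$ for all $x\in\qU_n^\imath$ is equivalent to $\phi$ being a left $\qU_n^\imath$-module homomorphism, where $\qU_n^\imath$ acts on $\mathcal{V}_{n,r,d_2}^{\imath*}$ as the transpose of its right action on $\mathcal{V}_{n,r,d_2}^\imath$. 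This yields a natural $(\qU_r^\imath,\qU_p^\imath)$-module isomorphism
$$(\mathcal{V}_{p,n,d_1}^\imath\otimes \mathcal{V}_{n,r,d_2}^\imath)^{\qU_n^\imath}\simeq \Hom_{\qU_n^\imath}(\mathcal{V}_{n,r,d_2}^{\imath*},\mathcal{V}_{p,n,d_1}^\imath).$$

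Then I would substitute the decompositions $\mathcal{V}_{p,n,d_1}^\imath\simeq \bigoplus_{\lambda\in\Par_{\min\{n,p\},d_1}^\imath}L_\lambda^{n,\imath}\otimes L_\lambda^{p,\imath*}$ from \eqref{eq:iHowe} and $\mathcal{V}_{n,r,d_2}^{\imath*}\simeq \bigoplus_{\mu\in\Par_{\min\{n,r\},d_2}^\imath}L_\mu^{n,\imath}\otimes L_\mu^{r,\imath*}$ from \eqref{dec:dual} into the Hom space. Because $\qU_n^\imath$ acts only on the first tensor factor of each summand and the $L_\lambda^{n,\imath}$ are pairwise non-isomorphic absolutely simple modules, Schur's lemma collapses the Hom space to
$$\Hom_{\qU_n^\imath}\bigl(L_\mu^{n,\imath}\otimes L_\mu^{r,\imath*},\,L_\lambda^{n,\imath}\otimes L_\lambda^{p,\imath*}\bigr)\simeq \delta_{\lambda\mu}\,L_\lambda^{r,\imath}\otimes L_\lambda^{p,\imath*}$$
as $(\qU_r^\imath,\qU_p^\imath)$-modules, via the canonical identification $\Hom_\K(L_\lambda^{r,\imath*},L_\lambda^{p,\imath*})\simeq L_\lambda^{r,\imath}\otimes L_\lambda^{p,\imath*}$. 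The requirement $\lambda=\mu$ forces $d_1=|\lambda|=|\mu|=d_2$, and the joint condition on the indexing set collapses to $\lambda\in\Par_{\min\{n,p,r\},d}^\imath$ using $(\min\{n,p\})^\pm=\min\{n^\pm,p^\pm\}$. This gives the asserted decomposition of $\mathcal{X}_{p,r,d}^\imath$, and summing over $d$ then yields that of $\mathcal{X}_{p,r}^\imath$ together with the splitting $\mathcal{X}_{p,r}^\imath\simeq \bigoplus_{d\geq 0}\mathcal{X}_{p,r,d}^\imath$.

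The main obstacle I expect is bookkeeping the left/right module conventions: the $\qU_n^\imath$-actions on $\mathcal{V}_{p,n,d_1}^\imath$ and $\mathcal{V}_{n,r,d_2}^\imath$ sit on opposite sides, so passing to the dual is essential before Schur's lemma applies, and one must verify that the induced $(\qU_r^\imath,\qU_p^\imath)$-structures on both sides of each isomorphism match under the chosen conventions. Beyond this, the argument is routine finite-dimensional Schur-lemma arithmetic built on top of the $\imath$Howe dual pairs already established.
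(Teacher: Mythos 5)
Your argument is correct and is essentially the paper's own proof: both reduce the statement to the $\imath$Howe duality decompositions \eqref{eq:iHowe}/\eqref{dec:dual}, convert the invariance condition $\sum xf\otimes g=\sum f\otimes gx$ into a $\qU_n^\imath$-equivariance condition on a Hom space, and finish with Schur's lemma, with the residual $(\qU_r^\imath,\qU_p^\imath)$-structure tracked through the identifications. Your degree-by-degree formulation (showing the bi-graded pieces with $d_1\neq d_2$ have no invariants) is only a minor packaging difference that also delivers the final splitting $\mathcal{X}_{p,r}^\imath\simeq\bigoplus_d\mathcal{X}_{p,r,d}^\imath$ directly.
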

	\begin{proof}
		The proof of the decomposition of $\mathcal{X}_{p,r,d}^\imath$ is similar to the one of $\mathcal{X}_{p,r}^\imath$, so we just prove the decomposition for $\mathcal{X}_{p,r}^\imath$.
		By $\imath$Howe duality, as a $(\qU_m^\imath,\qU_n^\imath)$-module,
		$$\mathcal{V}_{n,m}^\imath \simeq \bigoplus_{\lambda \in \Par_{\min\{m,n\}}^\imath} L_\lambda^{m,\imath} \otimes L_\lambda^{n,\imath*}.$$
		So as a $(\qU_n^\imath \otimes \qU_r^\imath,\qU_p^\imath \otimes \qU_n^\imath)$-module,
		\begin{align*}
			\mathcal{P}_{p,r}^\imath &\simeq \bigoplus_{\lambda \in \Par_{\min\{n,p\}}^\imath, \mu \in \Par_{\min\{n,r\}}^\imath} L_\lambda^{n,\imath} \otimes L_\lambda^{p,\imath*} \otimes L_\mu^{r,\imath} \otimes L_\mu^{n,\imath*}\\
			&\simeq \bigoplus_{\lambda \in \Par_{\min\{n,p\}}^\imath, \mu \in \Par_{\min\{n,r\}}^\imath} \Hom_{\mathbb{K}}(L_\mu^{n,\imath}, L_\lambda^{n,\imath}) \otimes( L_\mu^{r,\imath} \otimes L_\lambda^{p,\imath*}).
		\end{align*}
		Under the above isomorphism, the condition in the definition of $\mathcal{X}_{p,r}^\imath$ reads $x\psi=\psi x$ for any $x\in \qU_n^\imath$ and $\psi\in \Hom(L_\mu^{n,\imath}, L_\lambda^{n,\imath})$, i.e.,
		\begin{align*}
			\mathcal{X}_{p,r}^\imath &\simeq \bigoplus_{\lambda \in \Par_{\min\{n,p\}}^\imath, \mu \in \Par_{\min\{n,r\}}^\imath} \Hom_{\qU_n^\imath}(L_\mu^{n,\imath}, L_\lambda^{n,\imath}) \otimes (L_\mu^{r,\imath} \otimes L_\lambda^{p,\imath*})\\
			&\simeq \bigoplus_{\lambda \in \Par_{\min\{n,p,r\}}^\imath} L_\lambda^{r,\imath} \otimes L_\lambda^{p,\imath*}.
		\end{align*}
	\end{proof}
	
	\subsection{The composition map $\Phi_{p,r,d}$}
	Recall $\mathcal{V}_{n,m,d}^{\imath*}\simeq\Hom_{\HH_{B_d}}(\V_M^{\otimes d},\V_N^{\otimes d})$. We shall identify an element in $\mathcal{V}_{n,m,d}^{\imath*}$ by its image under $\rho_{n,m,d}$, which is an $\HH_{B_d}$-homomorphism in $\Hom_{\HH_{B_d}}(\V_M^{\otimes d},\V_N^{\otimes d})$ by Lemma~\ref{dual}. There exists a natural composition map
	\begin{align*}
		\Phi_{p,r,d}: \mathcal{V}_{p,n,d}^{\imath*} \otimes \mathcal{V}_{n,r,d}^{\imath*} \rightarrow \mathcal{V}_{p,r,d}^{\imath*}, \quad \alpha \otimes \beta \mapsto \alpha \circ \beta.
	\end{align*}

	\begin{lem}\label{im}
		The morphism $\Phi_{p,r,d}$ factors through $\mathcal{V}_{p,n,d}^{\imath*} \otimes_{\qU_n^\imath} \mathcal{V}_{n,r,d}^{\imath*}$. Moreover, this induces a $(\qU_p^\imath,\qU_r^\imath)$-module isomorphism from $\mathcal{V}_{p,n,d}^{\imath*} \otimes_{\qU_n^\imath} \mathcal{V}_{n,r,d}^{\imath*}$ to $\mathrm{Im}\Phi_{p,r,d}$.
	\end{lem}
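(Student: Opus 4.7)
The plan is to first establish that $\Phi_{p,r,d}$ is $\qU_n^\imath$-balanced (so that it factors through the tensor product over $\qU_n^\imath$), then verify equivariance with respect to $\qU_p^\imath$ on the left and $\qU_r^\imath$ on the right, and finally show that the induced map is an isomorphism onto its image by a dimension count coming from the $\imath$Howe decomposition.

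For the factorization I would work inside the $\Hom_{\HH_{B_d}}$-models supplied by Lemma~\ref{dual}. Via the isomorphisms $\rho_{p,n,d}$ and $\rho_{n,r,d}$, elements $\alpha \in \mathcal{V}_{p,n,d}^{\imath*}$ correspond to $\HH_{B_d}$-linear maps $\V_N^{\otimes d} \to \V_P^{\otimes d}$ and elements $\beta \in \mathcal{V}_{n,r,d}^{\imath*}$ to $\HH_{B_d}$-linear maps $\V_R^{\otimes d} \to \V_N^{\otimes d}$, with $\Phi_{p,r,d}$ becoming ordinary composition. The computation in the proof of Lemma~\ref{dual} shows that the right $\qU_n^\imath$-action on $\alpha$ translates to pre-composition with the action of $\qU_n^\imath$ on the source $\V_N^{\otimes d}$, while the left $\qU_n^\imath$-action on $\beta$ translates to post-composition with the action on the target $\V_N^{\otimes d}$. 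Hence for any $y \in \qU_n^\imath$, both $\rho_{p,n,d}(\alpha y) \circ \rho_{n,r,d}(\beta)$ and $\rho_{p,n,d}(\alpha) \circ \rho_{n,r,d}(y\beta)$ equal the composition $\V_R^{\otimes d} \xrightarrow{\rho(\beta)} \V_N^{\otimes d} \xrightarrow{y} \V_N^{\otimes d} \xrightarrow{\rho(\alpha)} \V_P^{\otimes d}$, so $\Phi_{p,r,d}$ descends to a map $\overline{\Phi}_{p,r,d}: \mathcal{V}_{p,n,d}^{\imath*} \otimes_{\qU_n^\imath} \mathcal{V}_{n,r,d}^{\imath*} \to \mathrm{Im}\Phi_{p,r,d}$. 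In the same picture, the left $\qU_p^\imath$-action is realized by post-composition on $\V_P^{\otimes d}$ and the right $\qU_r^\imath$-action by pre-composition on $\V_R^{\otimes d}$; both commute with the inner composition at $\V_N^{\otimes d}$, giving the $(\qU_p^\imath, \qU_r^\imath)$-equivariance.

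Surjectivity of $\overline{\Phi}_{p,r,d}$ onto $\mathrm{Im}\Phi_{p,r,d}$ is immediate, so only injectivity remains, and for this I would compare dimensions via \eqref{dec:dual}. Substituting the multiplicity-free decompositions of $\mathcal{V}_{p,n,d}^{\imath*}$ and $\mathcal{V}_{n,r,d}^{\imath*}$ gives
$$\mathcal{V}_{p,n,d}^{\imath*} \otimes_{\qU_n^\imath} \mathcal{V}_{n,r,d}^{\imath*} \simeq \bigoplus_{\lambda, \mu} L_\lambda^{p,\imath} \otimes \bigl(L_\lambda^{n,\imath*} \otimes_{\qU_n^\imath} L_\mu^{n,\imath}\bigr) \otimes L_\mu^{r,\imath*}.$$
By the $\imath$Schur duality, the image of $\qU_n^\imath$ in $\End_\K(\V_N^{\otimes d})$ coincides with the centralizer of $\HH_{B_d}$, hence with the semisimple algebra $\bigoplus_\nu \End_\K(L_\nu^{n,\imath})$; a standard density argument then yields $L_\lambda^{n,\imath*} \otimes_{\qU_n^\imath} L_\mu^{n,\imath} \simeq \K\,\delta_{\lambda\mu}$, collapsing the display to $\bigoplus_{\lambda \in \Par_{\min\{n,p,r\},d}^\imath} L_\lambda^{p,\imath} \otimes L_\lambda^{r,\imath*}$. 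On the $\Hom_{\HH_{B_d}}$-side, $\mathrm{Im}\Phi_{p,r,d}$ consists of maps $\V_R^{\otimes d} \to \V_P^{\otimes d}$ factoring through $\V_N^{\otimes d}$; decomposing into isotypic components and using that rank-one factorizations already exhaust each $\Hom_\K(L_\lambda^{r,\imath}, L_\lambda^{p,\imath})$, this image equals $\bigoplus_{\lambda \in \Par_{\min\{n,p,r\},d}^\imath} \Hom_\K(L_\lambda^{r,\imath}, L_\lambda^{p,\imath})$, which has the same dimension. The main technical obstacle I foresee is justifying $L_\lambda^{n,\imath*} \otimes_{\qU_n^\imath} L_\mu^{n,\imath} = \K\,\delta_{\lambda\mu}$, since $\qU_n^\imath$ itself is not a priori semisimple; the key reduction is that every simple module appearing here factors through the semisimple image of $\qU_n^\imath$ in $\End_\K(\V_N^{\otimes d})$ delivered by the $\imath$Schur duality, after which the computation becomes standard.
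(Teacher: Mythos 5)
Your proposal is correct and follows essentially the same route as the paper: you establish $\qU_n^\imath$-balancedness by viewing $\Phi_{p,r,d}$ as composition in the $\Hom_{\HH_{B_d}}$-model from Lemma~\ref{dual}, and then prove the induced surjection is an isomorphism by matching the multiplicity-free decompositions of $\mathcal{V}_{p,n,d}^{\imath*} \otimes_{\qU_n^\imath} \mathcal{V}_{n,r,d}^{\imath*}$ and $\mathrm{Im}\Phi_{p,r,d}$ coming from the type B Schur duality. The only difference is cosmetic: the paper packages the equivariance and the computation of the image into a commutative diagram, while you argue in words and, helpfully, spell out the step $L_\lambda^{n,\imath*} \otimes_{\qU_n^\imath} L_\mu^{n,\imath} \simeq \K\,\delta_{\lambda\mu}$ (via the semisimple image of $\qU_n^\imath$ in $\End_\K(\V_N^{\otimes d})$) which the paper leaves implicit.
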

	
	\begin{proof}
		As $\rho_{n,m,d}$ is a $(\qU_n^\imath,\qU_m^\imath)$-module homomorphism by Lemma~\ref{dual}, we know $$\{\alpha x \otimes \beta -\alpha \otimes x\beta ~|~ x\in \qU_n^\imath, \alpha \in \mathcal{V}_{p,n,d}^{\imath*}, \beta \in \mathcal{V}_{n,r,d}^{\imath*}\} \subset \Ker\Phi_{p,r,d}.$$ Therefore, $\Phi_{p,r,d}$ factors through $\mathcal{V}_{p,n,d}^{\imath*} \otimes_{\qU_n^\imath} \mathcal{V}_{n,r,d}^{\imath*}$.
		
We have the following commutative diagram of $(\qU_p^\imath,\qU_r^\imath)$-modules:
		$$\begin{tikzcd}
			{\mathcal{V}_{p,n,d}^{\imath*} \otimes \mathcal{V}_{n,r,d}^{\imath*}} \arrow[d, "\cong"] \arrow[r, "\Phi_{p,r,d}"] & {\mathcal{V}_{p,r,d}^{\imath*}} \arrow[d, "\cong"]\\
			{\Hom_{\HH_{B_d}}(\V_N^{\otimes d}, \V_P^{\otimes d}) \otimes \Hom_{\HH_{B_d}}(\V_R^{\otimes d}, \V_N^{\otimes d})} \arrow[d, "\cong"] \arrow[r,, "composition"] & {\Hom_{\HH_{B_d}}(\V_R^{\otimes d}, \V_P^{\otimes d})} \arrow[d, "\cong"]\\
			{\bigoplus\limits_{\tiny \begin{array}{l}
						\lambda \in \Par_{\min\{n,p\},d}^\imath \\ \mu \in \Par_{\min\{n,r\},d}^\imath
				\end{array}} \Hom(L_\lambda^{n,\imath}, L_\lambda^{p,\imath}) \otimes \Hom(L_\lambda^{r,\imath}, L_\lambda^{n,\imath})} \arrow[r] & {\bigoplus\limits_{\lambda \in \Par_{\min\{p,r\},d}^\imath}  \Hom(L_\lambda^{r,\imath}, L_\lambda^{p,\imath})},
		\end{tikzcd}$$
		Here the commutativity of the upper half diagram follows from the definition of $\Phi_{p,r,d}$, while the commutativity of the down half follows from the Schur duality of type B. Moreover,  the bottom arrow maps onto $$\bigoplus\limits_{\lambda \in \Par_{\min\{n,p,r\},d}^\imath} \Hom(L_\lambda^{r,\imath}, L_\lambda^{p,\imath}) \cong \bigoplus\limits_{\lambda \in \Par_{\min\{n,p,r\},d}^\imath}  L_\lambda^{p,\imath} \otimes L_\lambda^{r,\imath*}.$$
On the other hand,
		\begin{align*}
			\mathcal{V}_{p,n,d}^{\imath*} \otimes_{\qU_n^\imath} \mathcal{V}_{n,r,d}^{\imath*} &\simeq \bigoplus\limits_{\tiny \begin{array}{l}
\lambda \in \Par_{\min\{n,p\},d}^\imath \\ \mu \in \Par_{\min\{n,r\},d}^\imath
\end{array}} L_\lambda^{p,\imath} \otimes  (L_\lambda^{n,\imath*} \otimes_{\qU_n^\imath} L_\mu^{n,\imath}) \otimes  L_\mu^{r,\imath*}\\
			&\simeq \bigoplus\limits_{\lambda \in \Par_{\min\{n,p,r\},d}^\imath}  L_\lambda^{p,\imath} \otimes L_\lambda^{r,\imath*}.
		\end{align*}
		Therefore, it must be a $(\qU_p^\imath,\qU_r^\imath)$-module isomorphism from $\mathcal{V}_{p,n,d}^{\imath*} \otimes_{\qU_n^\imath} \mathcal{V}_{n,r,d}^{\imath*}$ to $\mathrm{Im}\Phi_{p,r,d}$.
	\end{proof}
	
	\subsection{The FFT for $\qU_n^\imath$}
	For $\alpha \in \mathcal{V}_{p,n,d}^{\imath*}, \beta \in \mathcal{V}_{n,r,d}^{\imath*}$ and $\mj \in \mathbf{r}^d$, we have $$(\alpha\circ\beta) v_\mj=\sum_{\mi \in \mathbf{p}^d, \mk \in \Nid^d} \langle \alpha, \widetilde{t}_{\mi\mk} \rangle \langle \beta, \widetilde{t}_{\mk\mj} \rangle v_\mi,$$ which implies $$\langle \alpha\circ\beta, \widetilde{t}_{\mi\mj}\rangle =\sum_{\mk \in \Nid^d} \langle \alpha, \widetilde{t}_{\mi\mk} \rangle \langle \beta, \widetilde{t}_{\mk\mj} \rangle.$$ Therefore, we can write the dual map of $\Phi_{p,r,d}$ explicitly as follows:
	$$\Psi_{p,r,d}^\imath(:=\Phi_{p,r,d}^*): \mathcal{V}_{p,r,d}^\imath \rightarrow \mathcal{V}_{p,n,d}^\imath\otimes\mathcal{V}_{n,r,d}^\imath,\quad \widetilde{t}_{\mi\mj} \mapsto \sum_{\mk \in \Nid^d} \widetilde{t}_{\mi\mk} \otimes \widetilde{t}_{\mk\mj} \quad (\mi \in \underline{\mathbf{p}}^d, \mj \in \underline{\mathbf{r}}^d).$$
	Furthermore, we define a map $$\Psi_{p,r}^\imath=\prod_{d}\Psi_{p,r,d}^\imath: \mathcal{V}_{p,r}^\imath \rightarrow \mathcal{P}_{p,r}^\imath.$$

Since $\mathcal{P}_{p,r}^\imath$ is a $\mathcal{P}_{P,R}$-module, it is also a  $\mathcal{V}_{P,R}$-module canonically by the algebra homomorphism $\Psi_{P,R}$ appeared in Theorem~\ref{UN:FFTandSFT}. Moreover, The map $\Psi_{p,r}^\imath$ is a $\mathcal{V}_{P,R}$-module homomorphism.
	\begin{thm}\label{FFT}
		\begin{itemize}
			\item[(1)] The map $\Psi_{p,r}^\imath$ is a $(\qU_r^\imath,\qU_p^\imath)$-module homomorphism.
			\item[(2)] It holds that $\mathrm{Im}\Psi_{p,r}^\imath=\mathcal{X}_{p,r}^\imath$. Therefore, $\mathcal{X}_{p,r}^\imath$ is a quotient space of $\mathcal{V}_{p,r}^\imath$.
		\end{itemize}
	\end{thm}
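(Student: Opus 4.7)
The plan is to prove (1) by a direct degree-wise computation from the defining formula $\Psi_{p,r,d}^\imath(\widetilde{t}_{\mi\mj}) = \sum_{\mk \in \underline{\mathbf{n}}^d} \widetilde{t}_{\mi\mk} \otimes \widetilde{t}_{\mk\mj}$ together with the action formulas in Lemma~\ref{iope}, and to prove (2) by first verifying the inclusion $\mathrm{Im}\Psi_{p,r}^\imath \subset \mathcal{X}_{p,r}^\imath$ on generators and then upgrading this to equality via a dimension comparison using the multiplicity-free decompositions already in hand.

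For (1), fix $x \in \qU_r^\imath$ and a generator $\widetilde{t}_{\mi\mj}$. Applying Lemma~\ref{iope} inside $\mathcal{V}_{p,r,d}^\imath$ expands $x\widetilde{t}_{\mi\mj}$ as a sum of $\widetilde{t}_{\mi\ml}$'s with coefficients $x_{\ml\mj}$; applying $\Psi_{p,r,d}^\imath$ term by term yields an explicit double sum indexed by $(\ml,\mk)$. On the other side, the $\qU_r^\imath$-action on $\mathcal{P}_{p,r,d}^\imath$ only sees the second tensor factor $\mathcal{V}_{n,r,d}^\imath$, and Lemma~\ref{iope} applied there produces precisely the same double sum after renaming dummy indices. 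The right $\qU_p^\imath$-action is handled identically by symmetry.

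For (2), I first check the easy direction. Given $x \in \qU_n^\imath$, expanding the two sides of the defining condition \eqref{def:itensor} for $\Psi_{p,r,d}^\imath(\widetilde{t}_{\mi\mj})$ reduces via Lemma~\ref{iope} to
\begin{equation*}
\sum_{\mk,\ml} x_{\ml\mk}\,\widetilde{t}_{\mi\ml}\otimes \widetilde{t}_{\mk\mj} \quad \text{and} \quad \sum_{\mk,\ml} x_{\mk\ml}\,\widetilde{t}_{\mi\mk}\otimes \widetilde{t}_{\ml\mj},
\end{equation*}
which coincide after swapping the dummies $\mk \leftrightarrow \ml$; hence $\Psi_{p,r,d}^\imath(\widetilde{t}_{\mi\mj}) \in \mathcal{X}_{p,r,d}^\imath$. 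For the reverse inclusion, since $\Psi_{p,r,d}^\imath = \Phi_{p,r,d}^*$ we have $\dim \mathrm{Im}\Psi_{p,r,d}^\imath = \rank(\Phi_{p,r,d}) = \dim \mathrm{Im}\Phi_{p,r,d}$. Lemma~\ref{im} identifies the last quantity with $\sum_{\lambda \in \Par_{\min\{n,p,r\},d}^\imath}(\dim L_\lambda^{p,\imath})(\dim L_\lambda^{r,\imath})$, while Lemma~\ref{fd} gives exactly the same value for $\dim \mathcal{X}_{p,r,d}^\imath$. Combined with the established inclusion, equality follows in each degree, and summing over $d$ yields (2).

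The main technical obstacle is essentially bookkeeping: the element $x \in \qU_n^\imath$ acts from the left on $\mathcal{V}_{p,n}^\imath$ but from the right on $\mathcal{V}_{n,r}^\imath$, and Lemma~\ref{iope} provides two different-looking formulas for these two situations, so the index-matching in the key computations has to be carried out with some care. Beyond that, no input deeper than Lemma~\ref{iope}, Lemma~\ref{fd}, and Lemma~\ref{im} is required.
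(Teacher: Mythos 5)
Your argument is correct, and it reaches both parts of the theorem by a route that differs in flavor from the paper's. For (1), the paper does no index computation at all: it observes that $\Phi_{p,r,d}$ is a $(\qU_p^\imath,\qU_r^\imath)$-module homomorphism (this is part of the diagram in Lemma~\ref{im}) and simply notes that the dual of a module homomorphism between finite-dimensional modules is again one; your direct check via Lemma~\ref{iope} proves the same thing from the explicit formula, which is more hands-on but equivalent. For (2), the paper never verifies the inclusion $\mathrm{Im}\Psi_{p,r,d}^\imath\subset\mathcal{X}_{p,r,d}^\imath$ separately and never invokes Lemma~\ref{fd}: it realizes $\mathcal{X}_{p,r,d}^\imath$ as $\Ker\varphi$ for the map $\varphi(f\otimes g)=((x_if)\otimes g-f\otimes(gx_i))_i$, dualizes, identifies $\mathrm{Coker}\,\varphi^*=\mathcal{V}_{p,n,d}^{\imath*}\otimes_{\qU_n^\imath}\mathcal{V}_{n,r,d}^{\imath*}\simeq\mathrm{Im}\Phi_{p,r,d}$ by Lemma~\ref{im}, and concludes $\mathrm{Im}\Psi_{p,r,d}^\imath=\Ker\varphi$ by linear duality, so the whole statement follows from Lemma~\ref{im} alone. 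Your version replaces this exact-sequence dualization by the combination ``explicit inclusion on the spanning set $\Psi_{p,r,d}^\imath(\widetilde{t}_{\mi\mj})$ plus a dimension count,'' using Lemma~\ref{fd} for $\dim\mathcal{X}_{p,r,d}^\imath$ and the decomposition of $\mathcal{V}_{p,n,d}^{\imath*}\otimes_{\qU_n^\imath}\mathcal{V}_{n,r,d}^{\imath*}$ for $\dim\mathrm{Im}\Phi_{p,r,d}$; note that this last decomposition is extracted from the \emph{proof} of Lemma~\ref{im} (via \eqref{dec:dual}) rather than from its statement, so you should either cite it as such or redo that two-line computation. Both arguments are ultimately powered by the same multiplicity-free $\imath$Howe decompositions and by finite dimensionality of the graded pieces (needed for $\operatorname{rank}\Phi_{p,r,d}^*=\operatorname{rank}\Phi_{p,r,d}$); what the paper's route buys is economy (no appeal to Lemma~\ref{fd}, equality of subspaces falling out of the annihilator formalism), while yours buys transparency, since the invariance of the generators $\sum_{\mk}\widetilde{t}_{\mi\mk}\otimes\widetilde{t}_{\mk\mj}$ is exhibited concretely rather than hidden in the duality.
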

	\begin{proof}
		Since $\Phi_{p,r,d}$ is a $(\qU_p^\imath, \qU_r^\imath)$-homomorphism by Lemma~\ref{im}, its dual $\Phi_{p,r,d}^{*}$ is canonically a $(\qU_r^\imath, \qU_p^\imath)$-homomorphism.
		
		Assume that $x_1, \ldots, x_k$ generate $\qU_n^\imath$. We define a map $$\varphi: \mathcal{V}_{p,n,d}^\imath \otimes \mathcal{V}_{n,r,d}^\imath \rightarrow (\mathcal{V}_{p,n,d}^\imath \otimes \mathcal{V}_{n,r,d}^\imath)^{\oplus k},\quad  f\otimes g \mapsto ((x_i f)\otimes g-f\otimes (g x_i))_{1 \leq i \leq k}.$$ By the definition of $\mathcal{X}_{p,r,d}^\imath$, we see $\mathcal{X}_{p,r,d}^\imath=\Ker\varphi$. Taking the dual, we obtain a map
		$$\varphi^*: (\mathcal{V}_{p,n,d}^{\imath*} \otimes \mathcal{V}_{n,r,d}^{\imath*})^{\oplus k} \rightarrow \mathcal{V}_{p,n,d}^{\imath*} \otimes \mathcal{V}_{n,r,d}^{\imath*} ,\quad  (\alpha_i \otimes \beta_i)_{1 \leq i \leq k} \mapsto \sum_{1 \leq i \leq k}\alpha_i x_i \otimes \beta_i-\alpha_i \otimes x_i \beta_i.$$
		Noting that $\mathrm{Coker}\varphi^*=\mathcal{V}_{p,n,d}^{\imath*} \otimes_{\qU_n^\imath} \mathcal{V}_{n,r,d}^{\imath*}\simeq\mathrm{Im}\Phi_{p,r,d}$ by Lemma \ref{im}, thus $$\mathrm{Im}\Psi_{p,r,d}^{\imath}=\mathrm{Im}\Phi_{p,r,d}^{*}\simeq\mathrm{Ker}\varphi=\mathcal{X}_{p,r,d}^\imath,$$
		which implies $\mathrm{Im}\Psi_{p,r}^\imath=\mathcal{X}_{p,r}^\imath$.
	\end{proof}
	
	\begin{rem}\label{remark1}
		Note that the space of $\qU_n^\imath$-invariants $\mathcal{X}_{p,r}^\imath$ is not an algebra.
		The above theorem implies that as a $\mathcal{V}_{P,R}$-module, $\mathcal{X}_{p,r}^\imath$ is generated by $\Psi_{p,r}^\imath(\widetilde{t}_{\mi\mj})=\sum_{\mk \in \Nid^d} \widetilde{t}_{\mi\mk} \otimes \widetilde{t}_{\mk\mj}$, $(\mi \in \underline{\mathbf{p}}^d, \mj \in \underline{\mathbf{r}}^d)$. It can be regarded as the FFT of $\qU_n^\imath$ on $\mathcal{P}_{p,r}^\imath$.
	\end{rem}
	
	\subsection{The SFT for $\qU_n^\imath$}
	Define two elements in $\HH_{B_{n^+}}$ and $\HH_{B_{n^-}}$ as follows:
	\begin{align*}
		u_n^+:=\sum_{w \in W_{B_{n^+}}} q^{-\ell_0(w)} (-q)^{\ell(w)-\ell_0(w)} T_w, \qquad
		u_n^-:=\sum_{w \in W_{B_{n^-}}} (-q)^{\ell(w)} T_w,
	\end{align*}
	where $\ell(w)$ is the length of $w$, and $\ell_0(w)$ is the number of $s_0$ appearing in a reduced form of $w$. The function $\ell_0$ is well-defined since it is a weight function (cf. \cite{Lu03}).
One can check that for any $w \in W_{B_{n^{\pm}}}$,
	\begin{align*}
		T_w u_n^+&=q^{-\ell_0(w)} (-q)^{\ell(w)-\ell_0(w)} u_n^+=u_n^+ T_w;\\
		T_w u_n^-&=(-q)^{\ell(w)} u_n^-=u_n^- T_w.
	\end{align*}
That is, the element $u_n^+$ (resp. $u_n^-$) is a central element in $\HH_{B_{n^+}}$ (resp. $\HH_{B_{n^-}}$).
	
	We set $\mathbb{W}_{m,n}^+:=\V_M^{\otimes n^+}u_n^+$ and $\mathbb{W}_{m,n}^-:=\V_M^{\otimes n^-}u_n^-$.
	
	Let
	\begin{align*}
		\mathfrak{A}_{n,m}^{\imath+}&:=\{\mi \in \Mid^{n^+} ~|~ 0 \leq i_1 < \cdots < i_{n^+} \leq m\};\\
		\mathfrak{A}_{n,m}^{\imath-}&:=\{\mi \in \Mid^{n^-} ~|~ 0 < i_1 < \cdots < i_{n^-} \leq m\}.
	\end{align*}
	
	\begin{lem}\label{basis}
		The set $\{v_\mi u_n^\pm ~|~ \mi \in \mathfrak{A}_{n,m}^{\imath\pm}\}$ forms a basis of $\mathbb{W}_{m,n}^\pm$.
	\end{lem}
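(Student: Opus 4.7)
The plan is to prove both the $+$ and the $-$ cases in parallel, by showing separately that the stated set \emph{spans} $\mathbb{W}_{m,n}^\pm$ and that it is \emph{linearly independent}.

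For spanning, I would exploit the identity $u_n^\pm T_a = c_a^\pm u_n^\pm = T_a u_n^\pm$ with scalars $c_0^+ = q^{-1}$, $c_a^+ = -q$ for $a \geq 1$, and $c_a^- = -q$ for all $a$. Given any $\mj \in \Mid^{n^\pm}$, one computes $v_\mj T_a u_n^\pm$ in two ways: either by applying the eigenvalue $c_a^\pm$ to $v_\mj u_n^\pm$, or by first using the explicit right $\HH_{B_{n^\pm}}$-action on $\V_M^{\otimes n^\pm}$ recalled in the $\imath$Schur duality subsection. Comparing the two expressions case by case in $j_a < j_{a+1}$, $j_a = j_{a+1}$, $j_a > j_{a+1}$ (for $a \geq 1$) and in $j_1 > 0$, $j_1 = 0$, $j_1 < 0$ yields a complete set of straightening relations: a vanishing rule $v_\mj u_n^\pm = 0$ whenever $j_a = j_{a+1}$ for some $a \geq 1$ (and additionally $v_\mj u_n^- = 0$ when $j_1 = 0$); a swap rule $v_{\mj s_a} u_n^\pm = -q\, v_\mj u_n^\pm$ for $a \geq 1$ with $j_a < j_{a+1}$; and a sign-flip rule $v_{\mj s_0} u_n^+ = q^{-1} v_\mj u_n^+$ (respectively $v_{\mj s_0} u_n^- = -q\, v_\mj u_n^-$) when $j_1 > 0$. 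Iterating these relations, any $v_\mj u_n^\pm$ reduces either to zero or to a scalar multiple of $v_\mi u_n^\pm$ with $\mi \in \mathfrak{A}_{n,m}^{\imath\pm}$: first apply the sign-flip rule to make the first entry non-negative (resp.\ positive), then use the swap rule to sort the entries in strictly increasing order, with any collision killing the vector.

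For linear independence, note that $v_\mi u_n^\pm$ is supported, as a $\K$-linear combination of standard basis vectors $v_\mk$, on the $W_{B_{n^\pm}}$-orbit of $\mi$, since $u_n^\pm$ is a linear combination of the $T_w$'s and each $T_w$ preserves such orbit spans. Two distinct elements of $\mathfrak{A}_{n,m}^{\imath\pm}$ are the unique non-negative (resp.\ positive) strictly-increasing representatives of their $W_{B_{n^\pm}}$-orbits, so they lie in distinct orbits, and hence the vectors $\{v_\mi u_n^\pm : \mi \in \mathfrak{A}_{n,m}^{\imath\pm}\}$ have pairwise disjoint supports in the standard basis of $\V_M^{\otimes n^\pm}$. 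Linear independence then reduces to showing the nonvanishing of each $v_\mi u_n^\pm$, which I would verify by a leading-term computation with respect to the lexicographic order on $\Mid^{n^\pm}$: the lex-maximum of the $W_{B_{n^\pm}}$-orbit of $\mi$ is $\mi^{\mathrm{rev}} := (i_{n^\pm}, \ldots, i_1)$, and the coefficient of $v_{\mi^{\mathrm{rev}}}$ in $v_\mi u_n^\pm$ comes from a unique $T_w$ (essentially the longest element of $W_{B_{n^\pm}}$ modulo the stabilizer of $\mi$) with an explicit nonzero scalar arising from the expansion of $u_n^\pm$.

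The main obstacle is the nonvanishing step: one must verify by induction on $\ell(w)$ that no $T_w$ other than the expected longest one contributes to the coefficient of $v_{\mi^{\mathrm{rev}}}$ in $v_\mi u_n^\pm$. This uses the explicit $T_a$-formulas together with the observation that the ``tail'' terms $(q^{-1}-q)v_\mj$ arise only when consecutive entries are in strictly decreasing order, so that reaching $v_{\mi^{\mathrm{rev}}}$ from $v_\mi$ via a shorter word than $w_0$ is impossible; once this coefficient is pinned down, linear independence follows from the disjoint-support observation.
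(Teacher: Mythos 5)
Your spanning argument is sound and is essentially the paper's: the paper picks the weakly increasing nonnegative representative $\mi'$ of the $W_{B_{n^\pm}}$-orbit and uses $v_\mi u_n^\pm=v_{\mi'}T_wu_n^\pm$ together with the eigenvalue property of $u_n^\pm$, plus the vanishing $v_{\mi'}u_n^\pm=0$ when two entries coincide (and, in the minus case, when an entry is $0$); your straightening relations are the same computation written locally. (Minor wording point: the sign-flip rule only acts on the first entry, so one must interleave swaps and flips rather than ``first flip, then sort,'' but since all your relations are invertible-scalar proportionalities this is harmless.) Your reduction of linear independence to nonvanishing via disjoint supports of distinct $W_{B_{n^\pm}}$-orbits is also correct.

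The genuine gap is the nonvanishing step, exactly the one you flag as the main obstacle. The claim that the coefficient of $v_{\mi^{\mathrm{rev}}}$ in $v_\mi u_n^\pm$ comes from a \emph{unique} $T_w$ is false in general. Take $m\in\mathbb{Z}$, $n^+=2$ and $\mi=(0,1)\in\mathfrak{A}_{n,m}^{\imath+}$: since $v_{(0,1)}T_0=q^{-1}v_{(0,1)}$, both $T_{s_1}$ and $T_{s_0s_1}$ contribute to the coefficient of $v_{(1,0)}$ in $v_{(0,1)}u_n^+$ (giving $-q$ and $-q^{-1}$ respectively, total $-(q+q^{-1})$). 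So the coefficient is nonzero here, but not for the reason you give, and your proposed induction starts from a false premise; in general one must control all contributions (zero first entries in the $+$ case, and potentially sign-flip round trips through $T_0$ in higher rank) and rule out cancellation, none of which is done. Also, the contributing element you should have in mind is the longest element of the symmetric subgroup $\mathfrak{S}_{n^\pm}\subset W_{B_{n^\pm}}$ (reversal with no sign changes), not ``the longest element of $W_{B_{n^\pm}}$ modulo the stabilizer,'' which would negate entries. The paper avoids all of this by a much softer argument: specialize $q\to1$, where $v_\mi u_n^\pm$ becomes the wedge-type vector $\sum_{w\in\mathfrak{S}_{n^\pm}}(-1)^{\ell(w)}(v_{i_{w(1)}}\pm v_{-i_{w(1)}})\otimes\cdots\otimes(v_{i_{w(n^\pm)}}\pm v_{-i_{w(n^\pm)}})$, and these are visibly nonzero and linearly independent, which suffices over $\K=\mathbb{C}(q)$. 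Substituting that specialization argument (or otherwise completing the leading-term analysis with all contributions accounted for) would close your gap.
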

	\begin{proof}
	We just prove the statement for $\mathbb{W}_{m,n}^+$.

For any $\mi \in \Mid^{n^+}$, there exists an $\mi'=(i_1,i_2,\ldots,i_{n^+}) \in \Mid^{n^+}$ such that $0 \leq i_1 \leq \cdots \leq i_{n^+} \leq m$ and $\mi=\mi'w$ for some $w \in W_{B_{n^+}}$ with the minimal length. We calculate that $v_\mi u_n^+=v_{\mi'}T_w u_n^+=q^{-\ell_0(w)} (-q)^{\ell(w)-\ell_0(w)} v_{\mi'} u_n^+$. Moreover, if $i_k'=i_{k+1}'$ for some $k$, then $(1+q^{-2})v_{\mi'} u_n^+ =v_{\mi'}(1+q^{-1}T_k)u_n^+=v_{\mi'}u_n^+-q^{-1}qv_{\mi'}u_n^+=0$. Thus, $\{v_\mi u_n^+ ~|~ \mi \in \mathfrak{A}_{n,m}^{\imath+}\}$ spans the space $\mathbb{W}_{m,n}^+$. Considering the limit $q \to 1$, for $\mi=(i_1,\ldots,i_{n^+})\in \mathfrak{A}_{n,m}^{\imath+}$, the element $v_\mi u_n^+$ specializes to $$\sum_{w\in \mathfrak{S}_{n^+}}(-1)^{\ell(w)}(v_{i_{w(1)}}+v_{-i_{w(1)}}) \otimes \cdots \otimes (v_{i_{w(n^+)}}+v_{-i_{w(n^+)}}) \in {(\fC^M)}^{\otimes n^+}.$$ So $\{v_\mi u_n^+ ~|~ \mi \in \mathfrak{A}_{n,m}^{\imath+}\}$ is linearly independent. Hence the lemma follows.
	\end{proof}
	
\begin{rem}\label{rem:W}
Let $V^+$ (resp. $V^-$) be the subspace of $\fC^M$ spanned by $v_i+v_{-i}$ (resp. $v_i-v_{-i}$), ($i\in\Mid$). The above proof shows that $\mathbb{W}_{m,n}^\pm$ are $q$-analogues of the wedge spaces $\bigwedge^{n^\pm} V^\pm$.
\end{rem}

	Recall in \S\ref{rew} that we give the definition of quantum minors $\Delta(\mi,\mj)$ for $\mi \in \mathfrak{A}_{N,P}$ and $\mj \in \mathfrak{A}_{N,R}$. Now for $\sigma \in \mathfrak{S}_N$, we define $\Delta(\mi,\mj \sigma):=(-q)^{\ell(\sigma)} \Delta(\mi,\mj)$.
	
	For $\mi \in \mathfrak{A}_{n,p}^{\imath+}$ and $\mj \in \mathfrak{A}_{n,r}^{\imath+}$, we define the \emph{positive $\imath$quantum $n$-minor} $\Delta^{\imath+}(\mi,\mj) \in \mathcal{V}_{p,r}^\imath$ by
	\begin{align*}
		\Delta^{\imath+}(\mi,\mj):=\sum_{|\mk|=\mj} q^{-\ell_0(\mk)} \widetilde{\Delta(\mi,\mk)},
	\end{align*}
	where $|\mk|:=(|k_1|,\ldots,|k_{n^+}|)$ and $\ell_0(\mk):=\sharp\{a ~|~ k_a<0\}$.
	For $\mi \in \mathfrak{A}_{n,p}^{\imath-}$ and $\mj \in \mathfrak{A}_{n,r}^{\imath-}$, we define the \emph{negative $\imath$quantum $n$-minor} $\Delta^{\imath-}(\mi,\mj) \in \mathcal{V}_{p,r}^\imath$ by
	\begin{align*}
		\Delta^{\imath-}(\mi,\mj):=\sum_{|\mk|=\mj} (-q)^{\ell_0(\mk)} \widetilde{\Delta(\mi,\mk)}.
	\end{align*}

	Let $\mathfrak{M}^{+}$ (resp. $\mathfrak{M}^{-}$) be the subspace of $\mathcal{V}_{p,r}^\imath$ spanned by all positive (resp. negative) $\imath$quantum $n$-minors.
		
	\begin{prop}
		The spaces $\mathfrak{M}^{\pm}$ are $(\qU_r^\imath, \qU_p^\imath)$-submodules of $\mathcal{V}_{p,r}^\imath$. Moreover, $\mathfrak{M}^{\pm} \cong \Hom(\mathbb{W}_{p,n}^\pm,\mathbb{W}_{r,n}^\pm)$
	\end{prop}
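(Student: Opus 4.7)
The two cases are parallel, so I focus on $\mathfrak{M}^+$. The strategy is to transport the problem to the Hecke-algebraic side via the isomorphism $\mathcal{V}_{p,r,n^+}^\imath\simeq\Hom_{\HH_{B_{n^+}}}(\mathbb{V}_P^{\otimes n^+},\mathbb{V}_R^{\otimes n^+})$ from \eqref{eq:iso}, and to identify $\mathfrak{M}^+$ inside the right-hand side as the direct summand cut out by the central element $u_n^+$.

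The identities $T_w u_n^+=u_n^+ T_w=q^{-\ell_0(w)}(-q)^{\ell(w)-\ell_0(w)}u_n^+$ displayed just before Lemma~\ref{basis} show that $u_n^+$ is central in $\HH_{B_{n^+}}$ and that $(u_n^+)^2=c^+\,u_n^+$ for a nonzero scalar $c^+\in\K$; hence $e^+:=(c^+)^{-1}u_n^+$ is a central idempotent with $\mathbb{W}_{p,n}^+=\mathbb{V}_P^{\otimes n^+}e^+$ and $\mathbb{W}_{r,n}^+=\mathbb{V}_R^{\otimes n^+}e^+$. Each $T_w$ acts by the same scalar on both wedge spaces, so every $\K$-linear map $\phi:\mathbb{W}_{p,n}^+\to\mathbb{W}_{r,n}^+$ is automatically $\HH_{B_{n^+}}$-equivariant and extends $\HH_{B_{n^+}}$-equivariantly to $\widetilde\phi:\mathbb{V}_P^{\otimes n^+}\to\mathbb{V}_R^{\otimes n^+}$ by $\widetilde\phi(v):=\phi(ve^+)$. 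This produces a natural injection $\iota^+:\Hom_\K(\mathbb{W}_{p,n}^+,\mathbb{W}_{r,n}^+)\hookrightarrow\Hom_{\HH_{B_{n^+}}}(\mathbb{V}_P^{\otimes n^+},\mathbb{V}_R^{\otimes n^+})$ whose image is the $e^+$-summand (the complementary summand being $\Hom_{\HH_{B_{n^+}}}(\mathbb{V}_P^{\otimes n^+}(1-e^+),\mathbb{V}_R^{\otimes n^+}(1-e^+))$). Because the $(\qU_r^\imath,\qU_p^\imath)$-action commutes with $\HH_{B_{n^+}}$ by $\imath$Schur duality, and hence with multiplication by the central $e^+$, the image of $\iota^+$ is automatically a $(\qU_r^\imath,\qU_p^\imath)$-submodule; so once it is matched with $\mathfrak{M}^+$ under \eqref{eq:iso}, both claims of the proposition follow at once.

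The remaining computational task is this identification. Expanding the definition,
$\Delta^{\imath+}(\mi,\mj)=\sum_{|\mk|=\mj}\sum_{\sigma\in\mathfrak{S}_{n^+}}q^{-\ell_0(\mk)}(-q)^{\ell(\sigma)}\widetilde{t}_{\mi,\mk\sigma}$,
where the outer sum over sign patterns $\mk$ with $|\mk|=\mj$ (weighted by $q^{-\ell_0(\mk)}$) and the inner antisymmetrization over $\mathfrak{S}_{n^+}$ (weighted by $(-q)^{\ell(\sigma)}$) combine, through the semidirect decomposition $W_{B_{n^+}}\cong(\Z/2)^{n^+}\rtimes\mathfrak{S}_{n^+}$, into a single sum over $W_{B_{n^+}}$ whose coefficients are exactly those defining $u_n^+$. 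Combining this with the explicit description of \eqref{eq:iso}, under which $\widetilde{t}_{\mi\mj'}$ corresponds to the $\HH_{B_{n^+}}$-equivariant map determined by $v_\mi\mapsto v_{\mj'}$---with Lemma~\ref{com} ensuring that this passes to the quotient by the relations \eqref{rel:tij} defining $\mathcal{V}_{p,r}^\imath$---one shows that $\Delta^{\imath+}(\mi,\mj)$ is sent to $\iota^+(E_{\mi,\mj})$, where $E_{\mi,\mj}\in\Hom_\K(\mathbb{W}_{p,n}^+,\mathbb{W}_{r,n}^+)$ is the matrix unit taking $v_\mi u_n^+$ to $v_\mj u_n^+$ and annihilating the remaining basis vectors of $\mathbb{W}_{p,n}^+$ supplied by Lemma~\ref{basis}. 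Since $(\mi,\mj)$ ranges over $\mathfrak{A}_{n,p}^{\imath+}\times\mathfrak{A}_{n,r}^{\imath+}$ and these matrix units form a basis of $\Hom_\K(\mathbb{W}_{p,n}^+,\mathbb{W}_{r,n}^+)$, the $\imath$quantum minors are linearly independent, and $\mathfrak{M}^+$ is precisely the image of $\iota^+$, giving $\mathfrak{M}^+\cong\Hom(\mathbb{W}_{p,n}^+,\mathbb{W}_{r,n}^+)$. The case of $\mathfrak{M}^-$ is handled identically with $u_n^+$ replaced by $u_n^-$ and weights $(-q)^{\ell(w)}$.

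The main obstacle is the final identification in the third paragraph: pinning down the isomorphism \eqref{eq:iso} concretely enough on the $\widetilde{t}_{\mi\mj'}$-generators, and deploying Lemma~\ref{com} to move between $\widetilde{t}$-elements with sign-flipped indices, so that the reassembly of the double sum in $\Delta^{\imath+}(\mi,\mj)$ into the $u_n^+$-antisymmetrizer acting on $v_\mj$ can be carried through rigorously modulo $\mathcal{I}_{p,r}$.
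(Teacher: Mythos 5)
Your first two paragraphs are sound and consistent with the paper: $u_n^\pm$ is central with $(u_n^\pm)^2$ a nonzero multiple of $u_n^\pm$, so $e^\pm$ is a central idempotent, $\mathbb{W}_{p,n}^\pm=\V_P^{\otimes n^\pm}e^\pm$, the $e^\pm$-summand of $\Hom_{\HH_{B_{n^\pm}}}(\V_P^{\otimes n^\pm},\V_R^{\otimes n^\pm})$ is exactly $\Hom_\K(\mathbb{W}_{p,n}^\pm,\mathbb{W}_{r,n}^\pm)$, and it is a $(\qU_r^\imath,\qU_p^\imath)$-submodule because the $\imath$quantum group actions commute with the Hecke action. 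The genuine gap is the decisive third step: identifying $\mathfrak{M}^\pm$ with that summand under \eqref{eq:iso} \emph{is} the proposition, and the bridge you propose does not hold up. The only explicit model available in the paper is the dual one of Lemma~\ref{dual}: $\widetilde{t}_{\mi\mj}$ acts on $\Hom_{\HH_{B_d}}(\V_R^{\otimes d},\V_P^{\otimes d})$ as the matrix-coefficient functional $\alpha\mapsto\langle\alpha,\widetilde{t}_{\mi\mj}\rangle$ (note the reversed direction of the Hom). There is no isomorphism sending $\widetilde{t}_{\mi\mj}$ to ``the $\HH_{B_{n^+}}$-equivariant map determined by $v_\mi\mapsto v_{\mj}$'': such a map is in general neither equivariant nor determined, and the $\widetilde{t}_{\mi\mj}$ are far from linearly independent (they satisfy the relations of $\mathcal{V}_{P,R}$ together with \eqref{rel:tij}, cf.\ Lemma~\ref{com}), so no correspondence can be prescribed generator-by-generator. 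Consequently your assertion that $\Delta^{\imath\pm}(\mi,\mj)$ is sent to the matrix unit $E_{\mi,\mj}$ --- from which you then extract both the linear independence of the $\imath$quantum minors and the equality $\mathfrak{M}^\pm=\mathrm{Im}\,\iota^\pm$ --- is not derived from anything; it is the statement to be proved. Even in the dual picture the verification is not formal: $v_\mj u_n^\pm$ is a weighted sum of $v_\mj T_w$, and $v_\mj T_w$ differs from $v_{\mj w}$ by lower-order terms, so pairing the expanded form of $\Delta^{\imath\pm}(\mi,\mj)$ against an equivariant map does not immediately give a matrix coefficient on $\mathbb{W}^\pm$.

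For comparison, the paper closes exactly this gap by a computation rather than by an explicit description of \eqref{eq:iso}: it introduces $\varrho_{p,r,d}(v_\mj)=\sum_{\mi}v_\mi\otimes\widetilde{t}_{\mi\mj}$, which intertwines the right $\HH_{B_d}$-actions, and evaluates $\varrho_{p,r,n^\pm}(v_\mj u_n^\pm)=\sum_{\mi\in\mathfrak{A}_{n,p}^{\imath\pm}}(v_\mi u_n^\pm)\otimes\Delta^{\imath\pm}(\mi,\mj)$, using Lemma~\ref{basis} to discard degenerate row indices and the relations \eqref{rel:tij} to convert the $W_{B_{n^\pm}}$-sum on the row index into the sign/permutation sums defining the minors on the column index. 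This produces a surjection $\Hom(\mathbb{W}_{p,n}^\pm,\mathbb{W}_{r,n}^\pm)\twoheadrightarrow\mathfrak{M}^\pm$ sending matrix units to minors (the opposite direction to your $\iota^\pm$); the submodule property then follows from a commutative diagram through $\Hom(\V_P^{\otimes n^\pm},\mathbb{W}_{r,n}^\pm)$, and the isomorphism from a dimension comparison --- not from an a priori linear independence of the minors. To repair your argument you would need either to actually establish an explicit form of \eqref{eq:iso} compatible with your matrix-unit claim (the form you state is false), or to replace your third paragraph by a computation of the paper's kind.
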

	\begin{proof}
		Define a $\K$-linear map
		$$\varrho_{p,r,d}: \mathbb{V}_R^{\otimes d} \rightarrow \mathbb{V}_P^{\otimes d} \otimes \mathcal{V}_{p,r,d}^\imath, \quad v_\mj \mapsto \sum_{\mi \in \underline{\mathbf{p}}^d} v_\mi \otimes \widetilde{t}_{\mi\mj}\quad \mbox{for $\mj \in \underline{\mathbf{r}}^d$}.$$
		Recall in \S\ref{subsecV} that $\mathcal{V}_{p,r,d}^{\imath*}$ is identified with $\Hom_{\HH_{B_d}}(\mathbb{V}_R^{\otimes d}, \mathbb{V}_P^{\otimes d})$ by $\rho_{p,r,d}$. For $\alpha \in \mathcal{V}_{p,r,d}^{\imath*}$ and $\mj \in \underline{\mathbf{r}}^d$, we have $\alpha(v_\mj)=\sum_{\mi \in \underline{\mathbf{p}}^d} \langle \widetilde{t}_{\mi\mj}, \alpha \rangle v_\mi$. Therefore,
		$$\varrho_{p,r,d}(v_\mj x)=\sum_{\mi \in \underline{\mathbf{p}}^d} (v_\mi x) \otimes \widetilde{t}_{\mi\mj},\quad \forall x \in \HH_{B_d}, \mj \in \underline{\mathbf{r}}^d.$$
		For each $\mj \in \mathfrak{A}_{n,r}^{\imath-}$ (In the case when $\mj \in \mathfrak{A}_{n,r}^{\imath+}$, we need to consider whether $j_0=0$, but the argument is similar. So for brevity, we just give a calculation for $\mj \in \mathfrak{A}_{n,r}^{\imath-}$ here), we have
		\begin{align*}
			&\varrho_{p,r,n^-}(v_\mj u_n^-)=\sum_{\mi \in \underline{\mathbf{p}}^{n^-}} (v_\mi u_n^-) \otimes \widetilde{t}_{\mi\mj}=\sum_{\mi \in \mathfrak{A}_{n,p}^{\imath-}} \sum_{w \in W_{B_{n^-}}} (v_\mi T_w u_n^-) \otimes \widetilde{t}_{\mi w,\mj}\\
			=&\sum_{\mi \in \mathfrak{A}_{n,p}^{\imath-}}  (v_\mi u_n^-) \otimes\sum_{w \in W_{B_{n^-}}} (-q)^{\ell(w)} \widetilde{t}_{\mi w,\mj}=\sum_{\mi \in \mathfrak{A}_{n,p}^{\imath-}} (v_\mi u_n^-) \otimes \Delta^{\imath-}(\mi ,\mj),
		\end{align*}
		which induces a surjective map (still denote by $\rho_{p,r,n^\pm}^*$) by Lemma \ref{basis}: $$\rho_{p,r,n^\pm}^*: \Hom(\mathbb{W}_{p,n}^\pm,\mathbb{W}_{r,n}^\pm) \rightarrow \mathfrak{M}^{\pm},\quad (v_\mi u_n^\pm)^* \otimes (v_\mj u_n^\pm) \mapsto \Delta^{\imath\pm}(\mi,\mj).$$ Here $\{(v_\mi u_n^\pm)^* ~|~ \mi \in \mathfrak{A}_{n,r}^{\imath\pm}\}$ is the dual basis of $\{(v_\mi u_n^\pm) ~|~ \mi \in \mathfrak{A}_{n,r}^{\imath\pm}\}$.
		
		Note that $\Hom(\mathbb{W}_{p,n}^\pm,\mathbb{W}_{r,n}^\pm)$ is a $(\qU_r^\imath, \qU_p^\imath)$-subquotient of $\Hom(\mathbb{V}_P^{\otimes n^\pm}, \mathbb{V}_R^{\otimes n^\pm})$. We have a commutative diagram as follows:
		$$\begin{tikzcd}
			{\Hom(\mathbb{W}_{p,n}^\pm,\mathbb{W}_{r,n}^\pm)} \arrow[d,two heads,"\rho_{p,r,n^\pm}^*"]& {\Hom(\mathbb{V}_P^{\otimes n^\pm}, \mathbb{W}_{r,n}^\pm)} \arrow[r, hook] \arrow[l,two heads] & {\Hom(\mathbb{V}_P^{\otimes n^\pm}, \mathbb{V}_R^{\otimes n^\pm})} \arrow[d,"\rho_{p,r,n^\pm}^*"]&\\
			\mathfrak{M}^{\pm} \arrow[rr,hook] && \mathcal{V}_{p,r,n^\pm}^\imath.&\\
		\end{tikzcd}$$
		Therefore, $\mathfrak{M}^{\pm}$ are $(\qU_r^\imath, \qU_p^\imath)$-submodules of $\mathcal{V}_{p,r}^\imath$.
Comparing the dimensions, we know $\dim_\K(\mathfrak{M}^{\pm})=\#\mathfrak{A}_{n,p}^{\imath\pm}\#\mathfrak{A}_{n,r}^{\imath\pm}=\dim_\K(\mathbb{W}_{p,n}^\pm)\dim_\K(\mathbb{W}_{r,n}^\pm)$. So it must be that $\mathfrak{M}^{\pm} \cong \Hom(\mathbb{W}_{p,n}^\pm,\mathbb{W}_{r,n}^\pm)$.
	\end{proof}
	
\begin{rem}
The above proof shows that elements in $\mathfrak{M}^{\pm}$ can be identified by linear functions in $\Hom(\mathbb{W}_{p,n}^\pm,\mathbb{W}_{r,n}^\pm)$. Recall in Remark~\ref{rem:W} that $\mathbb{W}_{m,n}^\pm$ are $q$-analogues of the wedge spaces $\bigwedge^{n^\pm} V_m^\pm$. Thus at the classical limit $q\rightarrow1$, the spaces $\mathfrak{M}^\pm$ specialize to $\Hom(\bigwedge^{n^\pm} V_p^\pm,\bigwedge^{n^\pm} V_r^\pm)$, which are canonically isomorphic to the subspaces spanned by the $n^\pm$-minors of $\mathrm{Mat}_{p^\pm \times r^\pm}$.
\end{rem}

	Let $\mathcal{I}$ be the $\mathcal{V}_{P,R}$-submodule of $\mathcal{V}_{p,r}^\imath$ generated by $\mathfrak{M}^+$ and $\mathfrak{M}^-$.
	\begin{thm}\label{SFT}
		\begin{itemize}
			\item[(1)] The space $\mathcal{I}$ is a $(\qU_r^\imath, \qU_p^\imath)$-submodule of $\mathcal{V}_{p,r}^\imath$.
			\item[(2)] The space $\mathcal{I}$ admits the multiplicity-free decomposition as a $(\qU_r^\imath, \qU_p^\imath)$-module:
			\begin{align*}
				\mathcal{I} \simeq \bigoplus_{\lambda \in \Par_{\min\{p,r\}}^\imath \backslash \Par_n^\imath} L_\lambda^{r,\imath} \otimes L_\lambda^{p,\imath*},
			\end{align*}
			which implies $\mathcal{V}_{p,r}^\imath/\mathcal{I}$ admits the following multiplicity-free decomposition as a $(\qU_r^\imath, \qU_p^\imath)$-module:
			\begin{align*}
				\mathcal{V}_{p,r}^\imath/\mathcal{I} \simeq \bigoplus_{\lambda \in \Par_{\min\{n,p,r\}}^\imath} L_\lambda^{p,\imath} \otimes L_\lambda^{r,\imath*}.
			\end{align*}
			\item[(3)] $\Ker \Psi_{p,r}^\imath=\mathcal{I}$.
		\end{itemize}
	\end{thm}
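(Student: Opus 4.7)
The plan is to prove (3) first, from which (1) and (2) will follow as consequences; Theorem~\ref{FFT} already identifies $\mathcal{V}^\imath_{p,r}/\Ker\Psi^\imath_{p,r}$ with $\mathcal{X}^\imath_{p,r}$ as a $(\qU^\imath_r,\qU^\imath_p)$-module, so the main task is to match $\mathcal{I}$ with $\Ker\Psi^\imath_{p,r}$.

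For the inclusion $\mathcal{I}\subseteq\Ker\Psi^\imath_{p,r}$: since $\Psi^\imath_{p,r}$ is $\mathcal{V}_{P,R}$-linear by Theorem~\ref{FFT}, it suffices to verify $\Psi^\imath_{p,r,n^\pm}(\Delta^{\imath\pm}(\mi,\mj))=0$ for each generating $\imath$quantum $n$-minor. By the definition $\Psi^\imath_{p,r,n^\pm}=\Phi^*_{p,r,n^\pm}$, this is equivalent to the vanishing of $\langle\Delta^{\imath\pm}(\mi,\mj),\alpha\circ\beta\rangle$ for all composable $\HH_{B_{n^\pm}}$-homomorphisms $\beta\colon\V_R^{\otimes n^\pm}\to\V_N^{\otimes n^\pm}$ and $\alpha\colon\V_N^{\otimes n^\pm}\to\V_P^{\otimes n^\pm}$. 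Using Lemma~\ref{dual} and the matrix-coefficient realization of $\Delta^{\imath\pm}(\mi,\mj)$ from the preceding proposition, I would recast this pairing as the coefficient of $v_\mi u^\pm_n$ in $\alpha(\beta(v_\mj)u^\pm_n)$. Since $u^\pm_n$ is central in $\HH_{B_{n^\pm}}$, the intermediate vector lies in $\mathbb{W}^\pm_{n,n}=\V_N^{\otimes n^\pm}u^\pm_n$, and the desired vanishing should follow from the combinatorial cancellation inherent in the antisymmetrizer built into $u^\pm_n$.

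For the reverse inclusion $\Ker\Psi^\imath_{p,r}\subseteq\mathcal{I}$ and the decomposition in~(2), I would combine the $\imath$Howe decomposition of $\mathcal{V}^\imath_{p,r}$ with Lemma~\ref{fd} to obtain
\[
\Ker\Psi^\imath_{p,r}\cong\mathcal{V}^\imath_{p,r}/\mathcal{X}^\imath_{p,r}\simeq\bigoplus_{\lambda\in\Par^\imath_{\min\{p,r\}}\setminus\Par^\imath_n}L^{r,\imath}_\lambda\otimes L^{p,\imath*}_\lambda,
\]
and then show that $\mathcal{I}$ exhausts the right-hand side. The generators $\mathfrak{M}^\pm\cong\Hom(\mathbb{W}^\pm_{p,n},\mathbb{W}^\pm_{r,n})$ sit as specific irreducible summands of $\mathcal{V}^\imath_{p,r,n^\pm}$, and the $\mathcal{V}_{P,R}$-action, which intertwines with the $(\qU^\imath_r,\qU^\imath_p)$-action through the right coideal property $\Delta(\qU^\imath_r)\subset\qU^\imath_r\otimes\qU_R$, should sweep out every remaining irreducible $L^{r,\imath}_\lambda\otimes L^{p,\imath*}_\lambda$ with $\lambda\notin\Par^\imath_n$. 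Combined with $\mathcal{I}\subseteq\Ker\Psi^\imath_{p,r}$, this forces equality and simultaneously establishes the decomposition of $\mathcal{I}$ in~(2); the quotient decomposition $\mathcal{V}^\imath_{p,r}/\mathcal{I}\simeq\mathcal{X}^\imath_{p,r}$ is then immediate from (3) and Theorem~\ref{FFT}(2).

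Statement~(1) is automatic once (3) is established, because $\Ker\Psi^\imath_{p,r}$ is a $(\qU^\imath_r,\qU^\imath_p)$-submodule of $\mathcal{V}^\imath_{p,r}$ as the kernel of a $(\qU^\imath_r,\qU^\imath_p)$-module homomorphism (Theorem~\ref{FFT}(1)). The main obstacle will be the vanishing step $\Psi^\imath_{p,r,n^\pm}(\Delta^{\imath\pm}(\mi,\mj))=0$: the interplay of the weights $q^{\mp\ell_0(\mk)}(-q)^{\ell(w)}$ in $\Delta^{\imath\pm}$ with the type-$B$ Hecke actions on tuples containing zero or repeated entries requires delicate case analysis to confirm that every relevant pairing vanishes.
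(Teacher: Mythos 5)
Your plan leaves both of its load-bearing steps unproven, and they are exactly the points where the paper has to do real work. First, the inclusion $\mathcal{I}\subseteq\Ker\Psi^\imath_{p,r}$ in your scheme rests on the vanishing $\Psi^\imath_{p,r,n^\pm}(\Delta^{\imath\pm}(\mi,\mj))=0$, which you do not establish but merely describe as a "combinatorial cancellation" and then flag as the main obstacle; as written this is an assertion, not an argument, and the interplay of the signs $q^{\mp\ell_0(\mk)}$ in $\Delta^{\imath\pm}$ with the type-B Hecke action (zero entries, the $T_0$-cases, repeated indices) is precisely what would need to be controlled. Second, the reverse inclusion is carried by the claim that the $\mathcal{V}_{P,R}$-action "should sweep out" every constituent $L^{r,\imath}_\lambda\otimes L^{p,\imath*}_\lambda$ with $\lambda\notin\Par^\imath_n$; this is nothing other than statement (2) of the theorem, and you give no mechanism for it. The paper proves (2) quite differently: it passes to the classical limit $q\to1$, where $\mathcal{I}$ specializes to the ideal of $\mathrm{sym}(\fC^{p^+}\otimes{\fC^{r^+}}^*)\oplus\mathrm{sym}(\fC^{p^-}\otimes{\fC^{r^-}}^*)$ generated by the $n^\pm$-minors, and imports the known $(\mathfrak{gl}_{r^\pm},\mathfrak{gl}_{p^\pm})$-decomposition of that determinantal ideal. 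Once (2) is in hand, (3) is soft: by the $\imath$Howe duality the decomposition of $\mathcal{V}^\imath_{p,r}$ is multiplicity-free, so $\mathcal{I}$ and $\Ker\Psi^\imath_{p,r}$ (the latter pinned down by Lemma~\ref{fd} and Theorem~\ref{FFT}) are submodules with the same set of constituents and hence coincide; in particular the explicit vanishing computation you single out as the hard step is never needed.

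There is also a structural issue with postponing (1). Your argument for exhaustion, and indeed any honest version of the "sweeping out" step, needs to know how multiplication by $\mathcal{V}_{P,R}$ interacts with the $(\qU^\imath_r,\qU^\imath_p)$-actions; this compatibility, $x(fg)y=(x_{(1)}fy_{(1)})(x_{(2)}gy_{(2)})$ for $f\in\mathfrak{M}^\pm$, $g\in\mathcal{V}_{P,R}$ together with the right coideal property and the fact that $\mathfrak{M}^\pm$ are $(\qU^\imath_r,\qU^\imath_p)$-submodules, is exactly the paper's proof of (1) (via \cite[Lemma~5.1]{LX22}), so it cannot simply be deferred as an afterthought to (3). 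In summary, the proposal is a reasonable outline of an alternative, more computational route, but both crucial steps (the vanishing of the $\imath$quantum minors under $\Psi^\imath_{p,r}$ and the identification of the constituents of $\mathcal{I}$) are left as gaps, and it misses the classical-limit plus multiplicity-freeness argument by which the paper avoids them.
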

	\begin{proof}
		By \cite[Lemma~5.1]{LX22}, we see $x(fg)y=(x_{(1)}fy_{(1)})(x_{(2)}gy_{(2)})$ for $f \in \mathfrak{M}^{\pm}$, $g \in \mathcal{V}_{P,R}$, $x \in \qU_r^\imath$, $y \in \qU_p^\imath$. Since the $\imath$quantum groups are right coideal subalgebras, i.e., $x_{(1)} \in \qU_r^\imath$ and $y_{(1)} \in \qU_s^\imath$, we have $x_{(1)}fy_{(1)} \in \mathfrak{M}^{\pm}$ and hence $x(fg)y \in \mathcal{I}$. Thus $\mathcal{I}$ is a $(\qU_r^\imath, \qU_p^\imath)$-module. This proves the statement (1).
		
	At the classical limit $q\rightarrow1$, the space $\mathcal{I}$ specializes to the ideal of $\mathrm{sym}(\fC^{p^+} \otimes {\fC^{r^+}}^*) \oplus \mathrm{sym}(\fC^{p^-} \otimes {\fC^{r^-}}^*)$ generated by $n^\pm$-minors in $\mathrm{Mat}_{p^\pm \times r^\pm}$. By considering the $(\mathfrak{gl}_{r^\pm},\mathfrak{gl}_{{p^\pm}})$-module decomposition of the ideal of $\mathrm{sym}(\fC^{p^\pm} \otimes {\fC^{r^\pm}}^*)$ generated by $n^\pm$-minors, we know that the specialization of $\mathcal{I}$ admits the decomposition
		$$\bigoplus_{\tiny \begin{array}{l}\lambda \in \Par_{\min\{p^+,r^+\}} \backslash \Par_{n^+}, \\ \mu \in \Par_{\min\{p^-,r^-\}} \backslash \Par_{n^-}\end{array}} L_\lambda^{r^+} \otimes L_\lambda^{p^+*} \oplus L_\mu^{r^-} \otimes L_\mu^{p^-*},$$ from which the desired two decompositions in the statement (2) follow. Then the statement~(3) is obtained by Lemma~\ref{fd} and Theorem~\ref{FFT}.
	\end{proof}
	\begin{rem}
		As we mentioned in Remark~\ref{remark1}, the space of $\qU_n^\imath$-invariants $\mathcal{X}_{p,r}^\imath$ is not an algebra. So it is not applicable to describe the SFT as generating relations of generators. Instead, we provide $\Ker \Psi_{p,r}^\imath$ which is an imitation of the description of SFT for $\qU_N$ in \cite{Z20}.
	\end{rem}

\end{document}